\theoremstyle{plain}
\newtheorem{teo}{Theorem}[section]
\newtheorem{pro}[teo]{Proposition}
\newtheorem{defi}[teo]{Definition}
\newtheorem{rem}[teo]{Remark}
\renewcommand{\d}{\operatorname{d}}
\newcommand{\diag}{\operatorname{diag}}
\newcommand{\N}{\mathbb{N}}
\newcommand{\R}{\mathbb{R}}
\DeclareRobustCommand{\gaussk}{\DOTSB\gaussk@\slimits@}
\newcommand{\gaussk@}{\mathop{\vphantom{\sum}\mathpalette\bigcal@{K}}}
\newcommand{\bigcal@}[2]{%
	\vcenter{\m@th
		\sbox\z@{$#1\sum$}%
		\dimen@=\dimexpr\ht\z@+\dp\z@
		\hbox{\resizebox{!}{0.8\dimen@}{$\mathcal{K}$}}%
	}%
}
\newcommand{\cfracplus}{\mathbin{\cfracplus@}}
\newcommand{\cfracplus@}{%
	\sbox\z@{$\dfrac{1}{1}$}%
	\sbox\tw@{$+$}%
	\raisebox{\dimexpr\dp\tw@-\dp\z@\relax}{$+$}%
}
\newcommand{\cfracdots}{\mathord{\cfracdots@}}
\newcommand{\cfracdots@}{%
	\sbox\z@{$\dfrac{1}{1}$}%
	\sbox\tw@{$+$}%
	\raisebox{\dimexpr\dp\tw@-\dp\z@\relax}{$\cdots$}%
}
\newcommand*{\relrelbarsep}{.386ex}
\newcommand*{\relrelbar}{%
	\mathrel{%
		\mathpalette\@relrelbar\relrelbarsep
	}%
}
\newcommand*{\@relrelbar}[2]{%
	\raise#2\hbox to 0pt{$\m@th#1\relbar$\hss}%
	\lower#2\hbox{$\m@th#1\relbar$}%
}
\providecommand*{\rightrightarrowsfill@}{%
	\arrowfill@\relrelbar\relrelbar\rightrightarrows
}
\providecommand*{\leftleftarrowsfill@}{%
	\arrowfill@\leftleftarrows\relrelbar\relrelbar
}
\providecommand*{\xrightrightarrows}[2][]{%
	\ext@arrow 0359\rightrightarrowsfill@{#1}{#2}%
}
\providecommand*{\xleftleftarrows}[2][]{%
	\ext@arrow 3095\leftleftarrowsfill@{#1}{#2}%
}
\tikzstyle{block} = [draw, rectangle, 
\begin{document}

\title[Markov chains and  stochastic bidiagonal factorization]{Spectral theory for Markov chains with  transition matrix admitting a stochastic bidiagonal factorization}

\author[A Branquinho]{Amílcar Branquinho$^{1}$}
\address{$^1$Departamento de Matemática,
 Universidade de Coimbra, 3001-454 Coimbra, Portugal}
\email{ajplb@mat.uc.pt}

\author[A Foulquié]{Ana Foulquié-Moreno$^{2}$}
\address{$^2$Departamento de Matemática, Universidade de Aveiro, 3810-193 Aveiro, Portugal}
\email{foulquie@ua.pt}

\author[M Mañas]{Manuel Mañas$^{3}$}
\address{$^3$Departamento de Física Teórica, Universidad Complutense de Madrid, Plaza Ciencias 1, 28040-Madrid, Spain \&
 Instituto de Ciencias Matematicas (ICMAT), Campus de Cantoblanco UAM, 28049-Madrid, Spain}
\email{manuel.manas@ucm.es}

\keywords{Markov chains, bidiagonal stochastic matrices, banded  matrices, oscillatory matrices, totally nonnegative matrices, multiple orthogonal polynomials, Favard spectral representation, positive bidiagonal factorization,  Karlin--McGregor representation formula, recurrent chains, ergodic chains}

\subjclass{42C05, 33C45, 33C47, 60J10, 60Gxx, 47B39, 47B36}

\enlargethispage{1.25cm}

\begin{abstract}
The recently established spectral Favard theorem for bounded banded matrices admitting a positive bidiagonal factorization is applied to a broader class of Markov chains with bounded banded transition matrices—extending beyond the classical birth-and-death setting—to those that allow a positive stochastic bidiagonal factorization.
In the finite case, the Karlin–McGregor spectral representation is derived. The recurrence of the Markov chain is established, and explicit formulas for the stationary distributions are provided in terms of orthogonal polynomials.
Analogous results are obtained for the countably infinite case. In this setting, the chain is not necessarily recurrent, and its behavior is characterized in terms of the associated spectral measure.
Finally, ergodicity is examined through the presence of a mass at $1$ in the spectral measure, corresponding to the eigenvalue $1$ with both right and left eigenvectors.
\end{abstract}
 
 \maketitle
 
 

\thispagestyle{empty}
 \section{Introduction}
 
  The spectral analysis of Markov chains and random walks has a long history,
 originating in the work of Karlin and McGregor and further developed in a
 systematic way using orthogonal polynomials and operator-theoretic methods;
 see \cite{KarlinMcGregor,DominguezIglesia2022}. In this paper we study discrete-time Markov chains on a finite or countable state space whose transition matrix is a bounded banded stochastic matrix admitting a \emph{stochastic bidiagonal factorization}. This setting extends the classical birth--and--death (tridiagonal) framework to processes with finitely many upward and downward jumps, while preserving a strong positivity structure. Our approach combines tools from total positivity and oscillation theory, where bidiagonal factorizations play a structural role \cite{Gantmacher,Fallat,Pinkus}, with the spectral viewpoint provided by mixed multiple orthogonal polynomials \cite{nikishin_sorokin,Ismail,andrei_walter,afm}. For oscillatory banded matrices, eigenvalues are simple and positive and eigenvectors exhibit controlled sign variation, making it possible to develop a spectral description that is well suited to probabilistic interpretations.
 
 Our starting point is the spectral Favard theorem established for bounded banded matrices admitting a positive bidiagonal factorization \cite{BFM1,BFM4}. In that framework one associates to the left and right recursion polynomials a matrix of positive measures for which these polynomials form mixed multiple orthogonal families, yielding a Karlin--McGregor type spectral representation. Here we adapt this machinery to the stochastic context by proving that any positive bidiagonal factorization of a banded stochastic matrix can be \emph{renormalized} into a product of bidiagonal \emph{stochastic} factors. This normalization procedure, implemented through suitable diagonal conjugations built recursively from the bidiagonal factors, provides a canonical stochastic factorization and makes explicit the link between oscillatory matrix structure and Markov evolution. The banded Hessenberg and tetradiagonal situations previously analyzed in \cite{BFM2,BFM3} serve as guiding examples. In addition, the present banded framework clarifies and streamlines some normalization and truncation aspects in the Markov-chain interpretation that in the Hessenberg setting require extra care: here we provide a systematic diagonal-conjugation procedure producing stochastic bidiagonal factors and a consistent treatment of finite truncations via Perron--Frobenius renormalization, which is essential for the subsequent spectral and probabilistic conclusions.
 
 A key step is an explicit normalization procedure that turns a given positive bidiagonal factorization into a stochastic one by conjugation with suitable diagonal matrices built recursively from the bidiagonal factors. This yields a stochastic factorization
 \[
 T=\hat{\mathscr L}_1\cdots\hat{\mathscr L}_p\hat{\mathscr U}_q\cdots \hat{\mathscr U}_1,
 \]
 where each factor is a positive bidiagonal stochastic matrix. Besides providing a structural description of the dynamics as a concatenation of elementary (pure birth / pure death) steps, this factorization is the bridge between probabilistic properties of the chain and the oscillation/spectral theory of the underlying banded operator.
 
 For finite truncations, although leading principal submatrices are only semi-stochastic, Perron--Frobenius theory allows one to renormalize them into genuinely stochastic matrices by a diagonal similarity transform involving the positive Perron eigenvector. In this finite setting we derive an explicit Karlin--McGregor spectral representation for the iterated transition probabilities and for the corresponding generating functions, written in terms of the mixed multiple orthogonal polynomials and the associated discrete matrix-valued spectral measures. As consequences, we establish irreducibility and aperiodicity, prove recurrence, and obtain closed formulas for the unique stationary distribution in terms of spectral data (left/right eigenvectors and Christoffel-type coefficients), together with geometric convergence rates governed by the second largest eigenvalue.
 
 In the countably infinite case we obtain analogous spectral representations with respect to the limiting matrix of measures supported in $[0,1]$. In contrast with the finite situation, the chain need not be recurrent, and we characterize recurrence versus transience through a sharp integrability condition involving the $(1,1)$-entry of the spectral measure, in the spirit of the classical random-walk theory of Karlin and McGregor \cite{KarlinMcGregor}. Finally, we analyze ergodicity (positive recurrence) via the presence of a mass at $1$ in the spectral measure, which corresponds to the eigenvalue $1$ carrying both right and left eigenvectors; in that case we identify the stationary distribution from the corresponding spectral weights.
 
 Summarizing, the main contributions of the paper are:
 \begin{itemize}
 	\item to connect banded Markov chains admitting stochastic bidiagonal factorizations with the spectral Favard theorem for positive bidiagonal factorizations \cite{BFM1,BFM4};
 	\item to derive Karlin--McGregor type formulas (finite and countable cases) in terms of mixed multiple orthogonal polynomials and matrix-valued spectral measures \cite{afm,KarlinMcGregor};
 	\item to establish recurrence and provide explicit stationary distributions in the finite case, and to characterize recurrence/ergodicity in the infinite case through the spectral measure (including the mass at $1$ criterion).
 \end{itemize}

\subsection{Elements of Markov chains}


A countable Markov chain is a discrete-time stochastic process represented by a sequence of random variables $\lbrace X_n\rbrace_{n\in\mathbb N_0}$, where the distribution of the next state depends only on the current state. See  \cite{gallager,feller}. In this paper we focus exclusively on finite Markov chains \cite{haggstrom,kirwood}
	where the random variables $\lbrace X_n\rbrace_{n\in\mathbb N_0}$ take values in the finite state space $\lbrace 0,1,\ldots,N\rbrace\subset\mathbb N_0$. Each element of this set is called a ``state.'' For a discussion of semi-infinite Markov chains, see \cite{JP,BFM5}.

The transition probabilities,
\[
P_{i,j}\coloneq \Pr(X_{n+1}=j\mid X_{n}=i),\qquad i,j\in\{0,1,\ldots,N\},
\]
form an $(N+1)\times (N+1)$ stochastic matrix $P\coloneq \left[ \begin{matrix} P_{i,j} \end{matrix} \right]_{i,j=0}^N$, which satisfies
\[
\begin{aligned}
	P_{i,j}&\geq0, &
	\sum_{k=0}^N P_{i,k}&=1, &
	i,j&\in\{0,1,\ldots,N\}.
\end{aligned}
\]

The first condition means that $P$ is entrywise non-negative, while the second implies that
\begin{align*}
	\begin{aligned}
		P e^{[N]}&=e^{[N]}, & e^{[N]}&\coloneq
		\begin{bNiceMatrix}
			1 &\Cdots &1
		\end{bNiceMatrix}^\top\in\R^{N+1}.
	\end{aligned}
\end{align*}

By the Chapman--Kolmogorov equation, the probabilities of moving from one state to another in $r$ steps are given by the entries of $P^r$, namely
\begin{align*}
	\Pr(X_{n+r}=j\mid X_{n}=i)&=(P^r)_{i,j}.
\end{align*}
Using this, we define the period of a state.

\begin{defi}[Period and aperiodic states]
	\label{defperiodo}
	Let $i\in\{0,1,\ldots,N\}$ be a state. The {period} $d(i)$ of $i$ is the greatest common divisor of all integers $r\in\mathbb N$ such that $(P^r)_{i,i}>0$. If $d(i)=1$, the state $i$ is said to be {aperiodic}.
\end{defi}

\begin{rem}
	\label{condicionsuficienteaperiodicidad}
	If $P_{i,i}>0$, then state $i$ is aperiodic, since the set of return times contains $r=1$, and hence its greatest common divisor is $1$.
\end{rem}

\begin{defi}[First-passage-time probability]
	The {first-passage-time probability} is defined by
	\begin{align*}
		f^r_{i,j}=\Pr(X_{r}=j,X_{r-1}\neq j,\ldots ,X_{1}\neq j\mid X_{0}=i),
	\end{align*}
	for $i,j\in\{0,1,\ldots,N\}$ and $r\in\N_0$. It represents the probability of reaching state $j$ for the first time after $r$ transitions when starting from $i$.
\end{defi}

The corresponding generating functions are
\begin{align*}
	\begin{aligned}
		P_{i,j}(s)&\coloneq \sum_{r=0}^\infty (P^r)_{i,j}s^r, &
		F_{i,j}(s)&\coloneq \sum_{r=1}^\infty f^r_{i,j}s^r,
	\end{aligned}
\end{align*}
and they satisfy
\begin{align*}
	F_{i,i}(s)=1-\dfrac{1}{P_{i,i}(s)}.
\end{align*}
Using this, we introduce the following notions.

\begin{defi}[Recurrence and transience]
	\label{defrecurrente}
	A state $i$ is called {recurrent} if the probability of ever returning to $i$ is $1$, or equivalently if
	\begin{align*}
		\lim_{s\rightarrow1^{-}}F_{i,i}(s)=1.
	\end{align*}
	Otherwise, the state $i$ is said to be {transient}. If all states are recurrent, the chain is called recurrent; otherwise, it is transient.
\end{defi}

\begin{rem}
	\label{condicionsuficienteparalarecurrencia}
	From the relation above, state $i$ is recurrent if and only if $\lim\limits_{s\rightarrow1^{-}}P_{i,i}(s)=+\infty$.
\end{rem}

\begin{defi}[Ergodicity]
	\label{defergodico}
	A state $i$ is called {ergodic} if it is both aperiodic and recurrent. If all states are ergodic, the chain is called an ergodic chain.
\end{defi}

\begin{defi}[Classes of states]
	We say that state $j$ is {\emph{accessible}} from state $i$ ($i\to j$) if there exists $n\in\N$ such that $(P^n)_{i,j}>0$. Two states {\emph{communicate}} if $i\to j$ and $j\to i$.
	Two states $i$ and $j$ are said to be in the same {class} if they communicate.
\end{defi}

Then, for two states $i$ and $j$ in the same class, starting from state $i$, state $j$ can be reached after finitely many transitions, and vice versa.

\begin{teo}[Class properties]
	The {states in the same class are either all recurrent or all transient and have the same period}.
\end{teo}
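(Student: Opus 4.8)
The plan is to exploit the communication hypothesis together with the Chapman--Kolmogorov relation to transfer both the divergence of the return-probability series and the set of return times between any two states of a common class. Since communication is symmetric, it suffices to argue for a fixed pair. Fix two communicating states $i$ and $j$, so that by definition there exist $m,n\in\N$ with $(P^m)_{i,j}>0$ and $(P^n)_{j,i}>0$. Writing $c\coloneq(P^m)_{i,j}(P^n)_{j,i}>0$, Chapman--Kolmogorov gives, for every $r\in\N_0$,
\begin{align*}
(P^{m+r+n})_{i,i}\geq (P^m)_{i,j}\,(P^r)_{j,j}\,(P^n)_{j,i}=c\,(P^r)_{j,j},
\end{align*}
together with the symmetric inequality in which $i$ and $j$ are interchanged. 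This single estimate drives both assertions.

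First I would settle the recurrence dichotomy. By Remark~\ref{condicionsuficienteparalarecurrencia}, state $i$ is recurrent precisely when $\lim_{s\to1^-}P_{i,i}(s)=+\infty$; since $P_{i,i}(s)=\sum_{r\geq0}(P^r)_{i,i}s^r$ has nonnegative coefficients, Abel's theorem identifies this limit with the series $\sum_{r\geq0}(P^r)_{i,i}$, so recurrence of $i$ is equivalent to divergence of that series. Summing the displayed inequality over $r$ and retaining only the terms of index at least $m+n$,
\begin{align*}
\sum_{r\geq0}(P^r)_{i,i}\geq\sum_{r\geq0}(P^{m+r+n})_{i,i}\geq c\sum_{r\geq0}(P^r)_{j,j}.
\end{align*}
Hence divergence of the $j$-series forces divergence of the $i$-series, so that recurrence of $j$ implies recurrence of $i$; the interchanged inequality gives the converse, and therefore $i$ and $j$ are simultaneously recurrent or simultaneously transient.

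Next I would treat the period. From $(P^{m+n})_{i,i}\geq c>0$ we see that $m+n$ is a common return time, so $d(i)\mid(m+n)$ and $d(j)\mid(m+n)$. Now let $r$ be any return time of $j$, that is $(P^r)_{j,j}>0$; the displayed inequality yields $(P^{m+r+n})_{i,i}>0$, whence $d(i)\mid(m+r+n)$. Combining with $d(i)\mid(m+n)$ gives $d(i)\mid r$, so $d(i)$ divides every return time of $j$ and consequently $d(i)\mid d(j)$, since $d(j)$ is the greatest common divisor of those return times. The symmetric argument yields $d(j)\mid d(i)$, and therefore $d(i)=d(j)$.

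The computations are entirely elementary, and I do not anticipate a genuine obstacle; the only point requiring a little care is the passage from the generating function to the series in the recurrence step, where one must justify interchanging the limit $s\to1^-$ with the summation—this is exactly Abel's theorem for power series with nonnegative coefficients (equivalently, monotone convergence). Once that identification is in place, the Chapman--Kolmogorov sandwich renders both the divergence property and the divisibility property manifestly symmetric in $i$ and $j$, which is precisely what the class-property statement demands.
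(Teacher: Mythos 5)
Your proof is correct. The paper states this class-property theorem as standard background from Markov chain theory and supplies no proof of its own (it is folklore material covered in the cited texts of Feller, Gallager, and H\"aggstr\"om), so there is nothing to compare against; your argument is exactly the classical one. The Chapman--Kolmogorov sandwich $(P^{m+r+n})_{i,i}\geq (P^m)_{i,j}(P^r)_{j,j}(P^n)_{j,i}$ correctly transfers both the divergence of $\sum_r (P^r)_{\cdot,\cdot}$ (hence recurrence, via the identification of $\lim_{s\to 1^-}P_{i,i}(s)$ with the series by monotone convergence) and the divisibility of return times (hence equality of periods), and the symmetry of communication closes both dichotomies. No gaps.
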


\begin{defi}[Irreducible Markov chain]
	{If there is only one class, the Markov chain is said to be irreducible}.
\end{defi}

In the examples we discuss, there will be only one class of states, hence the chains will be irreducible.

\begin{defi}[Expected return times]
	Let the first time to reach state $j$ starting from state $i$ be the random variable
	\begin{align*}
		T_{i,j}\coloneq \min\{n\geq 1:X_{n}=j\mid X_{0}=i\}.
	\end{align*}
	The mean first-passage time from $i$ to $j$ is
	\begin{align*}
		\bar t _{i,j}\coloneq \operatorname {E}(T_{i,j})=\sum_{n=1}^{\infty }n f^n_{i,j}.
	\end{align*}
	In particular, when $i=j$ we write $\bar t_i$ instead of $\bar t_{i,i}$ and call it the expected (mean) return time of state $i$.
\end{defi}

This expectation $\bar t_i$ represents the expected number of steps needed for the chain to return to the (recurrent) state $i$.

\begin{defi}[Probability vectors]
	A vector $\pi = \begin{bNiceMatrix}
		\pi_0&	\pi_1 & \Cdots & \pi_N
	\end{bNiceMatrix}$, where $\pi_i \geq 0$ for $i \in \{0,1,\dots,N\}$ and $\pi e^{[N]} = \sum_{i=0}^N \pi_i = 1$, is referred to as a probability vector. This vector encodes a probability distribution on the state space, where $\pi_i$ is the probability of being in state $i$.
\end{defi}

Assuming a probability vector ${\pi}(0)$ as the initial distribution, after one transition the new probability of being in state $k$ is
\[
\pi_k(1)=\sum_{i=0}^{N}\pi_i(0)P_{i,k}.
\]
Thus, the new probability vector is $\pi(1)=\pi(0)P$. Note that $\pi(1) e^{[N]}=\pi(0)P e^{[N]}=\pi(0) e^{[N]}=1$.
Therefore, after $n$ transitions the probability vector is ${\pi}(n)={\pi}(0 )P^n$, which follows from the Chapman--Kolmogorov equation
\[
(P^{n+m})_{i,j}=\sum_{k=0}^{N} (P^{n})_{i,k}(P^{m})_{k,j}.
\]

\begin{defi}
	A steady (or stationary) state is an invariant probability vector ${\pi}$ satisfying ${\pi}= \pi P$.
\end{defi}

\begin{rem}
	In terms of entries, $\pi_i=\sum_{j=0}^{N}\pi_j P_{j,i}$. These are the balance equations.
\end{rem}

\begin{teo}
	\begin{enumerate}
		\item For an irreducible Markov chain, if there exists a steady state, it is unique and the Markov chain is recurrent.
		\item If the Markov chain is recurrent, there exists a unique steady state. In this case, the steady-state entries are $\pi_i=\frac{1}{\bar t_i}$, for $ i\in\{0,1,\ldots,N\}$.
		\item For ergodic Markov chains, we have the limit property
		\begin{align}\label{eq:limit}
			\lim_{r\to\infty}(P^r)_{i,j}=\pi_j.
		\end{align}
	\end{enumerate}
\end{teo}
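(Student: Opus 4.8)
The plan is to treat this as the classical finite-state ergodic theorem, establishing the three parts by combining the Perron--Frobenius structure of the stochastic matrix $P$ with a renewal (excursion) argument for the return-time formula; throughout I work in the single-class (irreducible) setting in force here, so that all states communicate. For part (i), uniqueness follows from Perron--Frobenius theory: since $P$ is irreducible and entrywise nonnegative with spectral radius $1$ (as $e^{[N]}$ is a right eigenvector for the eigenvalue $1$), that eigenvalue is simple, so the solution space of $\pi=\pi P$ is one-dimensional and the normalization $\pi e^{[N]}=1$ pins down a unique $\pi$. For recurrence I would argue by contradiction using finiteness: were some state transient, then by the class-properties theorem every state is transient, whence $\sum_{r}(P^r)_{i,j}<\infty$ and in particular $(P^r)_{i,j}\to0$ for all $i,j$; letting $r\to\infty$ in the finite identity $\sum_{j=0}^N (P^r)_{i,j}=1$ would force $1=0$, a contradiction, so every state is recurrent.

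For part (ii) I would construct the stationary distribution from excursions. Fixing a reference state, say $0$, set $\gamma_j$ equal to the expected number of visits to $j$ during a single excursion from $0$ back to $0$; a one-step decomposition of these excursions (conditioning on the predecessor state and using the Markov property) shows $\gamma=\gamma P$, while summing the visit counts over $j$ gives $\sum_{j=0}^N\gamma_j=\bar t_0$. Normalizing $\gamma$ produces a stationary probability vector, which by part (i) equals the unique $\pi$, yielding existence. The identity $\pi_i=1/\bar t_i$ I would then read off from the renewal interpretation: the long-run fraction of time spent in state $i$ equals $\pi_i$ (by averaging $\pi=\pi P^n$) and, by the strong law applied to the i.i.d.\ successive return times to $i$, equals $1/\bar t_i$ as well; equating the two gives the formula.

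For part (iii), ergodicity means every state is aperiodic and recurrent, so the irreducible chain has period $1$. A standard number-theoretic argument—the set $\{r:(P^r)_{i,i}>0\}$ is closed under addition and has greatest common divisor $1$, hence contains all sufficiently large integers—shows that some power $P^m$ is entrywise positive, i.e.\ $P$ is primitive. The Perron--Frobenius theorem for primitive matrices then gives $P^r\to e^{[N]}\pi$, the rank-one matrix whose every row equals $\pi$, which is precisely $\lim_{r\to\infty}(P^r)_{i,j}=\pi_j$.

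The main obstacle is part (ii), and specifically the return-time identity $\pi_i=1/\bar t_i$: the Perron--Frobenius machinery delivers existence and uniqueness cleanly, but the probabilistic content of $1/\bar t_i$ requires the excursion construction together with a careful renewal-reward (ergodic-averaging) argument to identify $\gamma_i/\bar t_0$ with $1/\bar t_i$ and to guarantee consistency independently of the chosen reference state.
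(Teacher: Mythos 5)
The paper does not prove this theorem: it is stated as classical background on finite Markov chains and implicitly deferred to the cited textbooks (Feller, Gallager, H\"aggstr\"om), so there is no in-paper argument to compare against. Your proposal is the standard textbook proof and is essentially correct in the finite-state setting in force here: Perron--Frobenius gives simplicity of the eigenvalue $1$ and hence uniqueness of $\pi$; the row-sum contradiction rules out transience; the excursion measure $\gamma$ yields existence; and primitivity gives the convergence $P^r\to e^{[N]}\pi$ in part (iii).

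One point you should make explicit in part (ii): the excursion construction and the normalization $\sum_j\gamma_j=\bar t_0$ presuppose $\bar t_0<\infty$, i.e.\ positive recurrence, which does not follow from recurrence alone in general. In the finite irreducible case this is automatic (e.g.\ because the return time to $0$ has a geometrically decaying tail: there exist $m$ and $\varepsilon>0$ such that from every state the chain hits $0$ within $m$ steps with probability at least $\varepsilon$), but as written your argument silently uses it. With that supplied, the renewal-reward identification $\pi_i=1/\bar t_i$ and the independence from the reference state go through as you describe.
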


\begin{rem}
	Note that for any initial probability vector $\pi(0)$ for a Markov chain satisfying \eqref{eq:limit}, the long-term distribution is
	\begin{align*}
		\pi_j(\infty)=\lim_{r\to\infty} \sum_{i=0}^{N}\pi_i(0)(P^r)_{i,j}
		=\sum_{i=0}^{N}\pi_i(0)\pi_j=\pi_j.
	\end{align*}
	In other words, the steady state is an equilibrium distribution to which all initial distributions converge as time goes to infinity.
\end{rem}

\begin{rem}
	Simple random walks can be viewed as birth--death Markov chains, where the transition matrix is tridiagonal. Starting from a given state $i$, the only possible transitions are to remain in the same state or to move to the neighboring states $i+1$ or $i-1$.
\end{rem}

\begin{rem}[Time reversal]
	Assuming the steady state satisfies $\pi_i>0$ for $i\in\{0,1,\dots,N\}$, define a matrix $Q$ with entries $Q_{i,j}=\frac{\pi_j}{\pi_i}P_{j,i}$.
	
	The matrix $Q$ is stochastic and, using Bayes' formula with the initial distribution $\pi$, it can be written as $Q_{i,j}=\Pr\left(X_n=j\mid X_{n+1}=i\right)$. Hence, $Q$ is the transition matrix of the time-reversed chain. Moreover, if there exists a stochastic matrix $Q$ satisfying the detailed balance equations $\pi_iQ_{i,j}=\pi_j P_{j,i}$ for a probability vector $\pi$, then $\pi$ is a steady state. A Markov chain is called reversible when $Q=P$, that is, when $\pi_iP_{i,j}=\pi_j P_{j,i}$ holds. In other words, the Markov chain and its time-reversal have the same finite-dimensional distributions. See \cite{bremaud,haggstrom}.
\end{rem}

\begin{rem}
	The product $PQ$ of two stochastic matrices $P$ and $Q$ is again stochastic, and therefore it is the transition matrix of another Markov chain. Given an initial probability vector $\pi$, the vector $\pi PQ$ represents a two-step evolution: first $\pi\mapsto \pi P=\pi'$, and then $\pi'\mapsto \pi'Q$.
\end{rem}

\section{Markov chains with banded transition matrices}

We discuss applications of the spectral representation developed above for bounded banded semi-infinite matrices admitting a positive bidiagonal factorization to banded stochastic matrices $T$ of the form
\begin{align}\label{eq:monic_Hessenberg}
	T&=
	\left[	\begin{NiceMatrix}
		T_{0,0} &\Cdots &&T_{0,q}& 0 & \Cdots &&\\
		\Vdots& &&&\Ddots &\Ddots& &\\
		T_{p,0}&&&&&&&\\
		0&\Ddots[shorten-end=15pt]&&&&&&\\
		\Vdots[shorten-end=5pt]&\Ddots[shorten-end=5pt]&&&&&&\\
		&&&&&&&\\
		&&&&&&&\\[6pt]
		&&&&&& &
	\end{NiceMatrix}\right]
\end{align}
with $T_{n,m}>0$, $n,m\in\N_0$, and
\[
\begin{aligned}
	T_{n,n-p}+\cdots+T_{n,n+q}&=1, &n&\in\N_0,
\end{aligned}
\]
where, for $n\in\{0,\dots, p-1\}$, negative indices are understood to yield zero entries.

Let us consider the semi-infinite vector
\[
e\coloneq \begin{bNiceMatrix}
	1& 1 &\Cdots
\end{bNiceMatrix}^\top.
\]
A nonnegative matrix $T$ is said to be stochastic if $Te=e$, i.e., all its entries are nonnegative and the sum of the entries in each row is $1$.

Positive bidiagonal semi-infinite matrices are semi-infinite matrices of the form
\[ \begin{aligned}
	\mathscr	L&\coloneq \left[\begin{NiceMatrix}[columns-width=auto]
		\mathscr	L_{0,0} &0&\Cdots&\\
		\mathscr	L_{1,0}& \mathscr	L_{1,1}&\Ddots&\\
		0& \mathscr	L_{2,1}& \mathscr	L_{2,2}& \\
		\Vdots[shorten-start=5pt,shorten-end=7pt] &\Ddots& \Ddots& \Ddots\\&&&
	\end{NiceMatrix}\right], &
	\mathscr U& \coloneq
	\left[\begin{NiceMatrix}[columns-width = auto]
		\mathscr U_{0,0}& 	\mathscr U_{0,1}&0&\Cdots&\\
		0&	\mathscr U_{1,1}& 	\mathscr U_{1,2}&\Ddots&\\
		\Vdots[shorten-end=5pt]&\Ddots[shorten-end=17pt]&	\mathscr U_{2,2}&\Ddots&\\
		& &\Ddots[shorten-end=12pt]  &\Ddots[shorten-end=8pt]  &\\&&&&
	\end{NiceMatrix}\right],
\end{aligned}\]
i.e. with $\mathscr L_{i,i},\mathscr L_{i,i+1},\mathscr U_{i,i},\mathscr U_{i+1,i}>0$. These matrices can be written as
\[\begin{aligned}
	\mathscr L &=L \;\Delta, & L&= \left[\begin{NiceMatrix}[columns-width=auto]
		1 &0&\Cdots&\\
		\frac{	\mathscr	L_{1,0}}{	\mathscr	L_{0,0}}& 1 &\Ddots&\\
		0&\frac{	\mathscr	L_{2,1}}{	\mathscr	L_{1,1}}& 1& \\
		\Vdots[shorten-start=5pt,shorten-end=7pt] &\Ddots& \Ddots& \Ddots\\&&&
	\end{NiceMatrix}\right], & \Delta&=\diag (\mathscr L_{0,0},\mathscr L_{1,1},\dots),\\
	\mathscr U &= \nabla \; U, & U&=	\left[\begin{NiceMatrix}[columns-width = auto]
		1& \frac{\mathscr U_{0,1}}{\mathscr U_{0,0}}&0&\Cdots&\\
		0& 1& \frac{\mathscr U_{1,2}}{\mathscr U_{1,1}}&\Ddots&\\
		\Vdots[shorten-end=5pt]&\Ddots[shorten-end=17pt]&1&\Ddots&\\
		& &\Ddots[shorten-end=12pt]  &\Ddots[shorten-end=8pt]  &\\&&&&
	\end{NiceMatrix}\right],  & \nabla&=\diag(\mathscr U_{0,0},\mathscr U_{1,1},\dots)
\end{aligned}\]
in terms of normalized positive bidiagonal matrices and diagonal matrices.

\begin{pro}
	Any positive bidiagonal factorization of a banded semi-infinite matrix
	\[
	T=\mathscr L_1\cdots \mathscr L_p \mathscr U_q\cdots \mathscr U_1
	\]
	can be written in a normalized way as
	\[
	T=L_1   \cdots  L_p\Delta  U_q\cdots   U_1,
	\]
	in terms of normalized bidiagonal semi-infinite matrices and a positive diagonal semi-infinite matrix $\Delta$, and vice-versa.
\end{pro}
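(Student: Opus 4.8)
The plan is to reduce everything to a single commutation rule between diagonal matrices and normalized bidiagonal matrices, and then to carry all the diagonal factors to the center by repeated application of that rule. First I would record the two elementary identities. If $D=\diag(d_0,d_1,\dots)$ is a positive diagonal matrix and $L$ is a normalized lower bidiagonal matrix (ones on the diagonal, subdiagonal entries $\ell_i$), then a direct entrywise check gives $D\,L=\tilde L\,D$, where $\tilde L$ is again normalized lower bidiagonal with subdiagonal entries $\tilde\ell_i=(d_{i+1}/d_i)\,\ell_i$; the diagonal matrix is preserved and only the (positive) off-diagonal entries are rescaled. Symmetrically, for a normalized upper bidiagonal $U$ one has $U\,D=D\,\tilde U$ with $\tilde U$ normalized upper bidiagonal and $\tilde u_i=(d_{i+1}/d_i)\,u_i$. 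Both identities hold entrywise in the semi-infinite setting, so no convergence issue arises: all products are banded times banded and hence well defined row by row, and positivity of every entry is manifestly preserved.

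Next, using the already-established decompositions $\mathscr L_i=L_i\Delta^{(i)}$ and $\mathscr U_j=\nabla^{(j)}U_j$, I would write
\[
T=L_1\Delta^{(1)}L_2\Delta^{(2)}\cdots L_p\Delta^{(p)}\,\nabla^{(q)}U_q\cdots\nabla^{(1)}U_1.
\]
In the lower block I would push every diagonal factor to the right by the first identity: commuting $\Delta^{(1)}$ through $L_2$, then $\Delta^{(1)}\Delta^{(2)}$ through $L_3$, and so on. This leaves $L_1$ untouched, replaces each later $L_i$ by a modified normalized factor, and gathers all lower diagonals into $D_L:=\Delta^{(1)}\cdots\Delta^{(p)}$ at the right end of the block. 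In the upper block I would do the mirror-image computation, pushing each $\nabla^{(j)}$ to the left by the second identity so that they accumulate as $D_U:=\nabla^{(q)}\cdots\nabla^{(1)}$ at the left end, leaving $U_1$ untouched and modifying the remaining normalized factors.

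Combining the two blocks yields
\[
T=L_1\cdots L_p\,(D_L D_U)\,U_q\cdots U_1,
\]
after relabelling the modified normalized factors; setting $\Delta:=D_L D_U$, a product of positive diagonal matrices and hence itself a positive diagonal matrix, gives exactly the claimed normalized factorization. For the converse, given the normalized form it suffices to absorb $\Delta$ into one adjacent factor, say replacing $L_p$ by the non-normalized positive lower bidiagonal matrix $L_p\Delta$; since every normalized bidiagonal matrix is in particular a positive bidiagonal matrix, the resulting list $L_1,\dots,L_{p-1},L_p\Delta,U_q,\dots,U_1$ is a positive bidiagonal factorization of $T$.

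The computations themselves are routine, so the only real point of care is the bookkeeping in the second step: one must commute the accumulated product of diagonals, not a single diagonal, through each successive normalized factor, and keep the left-to-right order of the lower factors and the right-to-left order of the upper factors intact, so that $L_1$ and $U_1$ emerge unchanged and the central diagonal is correctly identified. I expect this indexing to be the main obstacle to a clean write-up, rather than any genuine mathematical difficulty.
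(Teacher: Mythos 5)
Your proposal is correct and follows essentially the same route as the paper: both split each factor as a normalized bidiagonal times a diagonal and then sweep the diagonals to the centre, your commutation identity $D\,L=(D L D^{-1})D$ being exactly the paper's conjugation $L_i=(\Delta_1\cdots\Delta_{i-1})\tilde L_i(\Delta_1\cdots\Delta_{i-1})^{-1}$ (and mirror-image for the upper factors), with the converse obtained by absorbing $\Delta$ back into the bidiagonal factors. The bookkeeping you flag is handled correctly, so nothing is missing.
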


\begin{proof}
	A positive bidiagonal factorization can be written as
	\[
	\begin{aligned}
		T&=\mathscr L_1\cdots \mathscr L_p \mathscr U_q\cdots \mathscr U_1\\&=
		\tilde L_1 \Delta_1 \tilde L_2 \Delta_2\cdots \tilde L_p\Delta_p\nabla_q \tilde U_q\cdots \nabla_2\tilde U_2\nabla_1 \tilde U_1,
	\end{aligned}
	\]
	with $\tilde L_i$, $i\in\{1,\dots,p\}$ and $\tilde U_j$, $j\in\{1,\dots,q\}$ normalized positive bidiagonal matrices, $\Delta_i$, $i\in\{1,\dots,p\}$, and $\nabla_j$, $j\in\{1,\dots,q\}$, positive diagonal matrices.
	Now, we define the bidiagonal matrices
	\[
	\begin{aligned}
		L_i &= (\Delta_1\cdots \Delta_{i-1})\tilde L_i (\Delta_1\cdots \Delta_{i-1})^{-1}, &i&\in\{1,\dots,p\},\\
		U_j &= (\nabla_{j-1}\cdots \nabla_1)^{-1}\tilde U_j (\nabla_{j-1}\cdots \nabla_1)  &j&\in\{1,\dots,q\},
	\end{aligned}
	\]
	where $L_1=\tilde L_1$ and $U_1=\tilde U_1$. By construction these new matrices are normalized positive bidiagonal matrices. Therefore, we can write
	\[
	\begin{aligned}
		T&=\mathscr L_1\cdots \mathscr L_p \mathscr U_q\cdots \mathscr U_1\\&=
		L_1   \cdots  L_p\Delta  U_q\cdots   U_1, & \Delta= \Delta_1\cdots \Delta_p\nabla_q\cdots\nabla_1.
	\end{aligned}
	\]
	
	Reversing the previous arguments, we conclude that any factorization
	\[
	\Delta= \Delta_1\cdots \Delta_p\nabla_q\cdots\nabla_1
	\]
	leads to a (non-normalized) positive bidiagonal factorization of $T$.
\end{proof}

The truncations (leading principal submatrices)
\begin{align}
	T^{[N]}&=
	\begin{bNiceMatrix}[columns-width = 1.5cm]
		T_{0,0} &\Cdots &&T_{0,q}& 0 & \Cdots &&0\\
		\Vdots& &&&\Ddots &\Ddots& &\Vdots\\
		T_{p,0}&&&&&&&\\
		0&\Ddots&&&&&&0\\
		\Vdots&\Ddots&&&&&&T_{N-q,N}\\
		&&&&&&&\Vdots\\
		&&&&&&&\\[6pt]
		0&\Cdots&&&0&T_{N,N-p}&\Cdots &T_{N,N}
	\end{bNiceMatrix}
\end{align}
that take only the first $N+1$ rows and columns,
have a corresponding normalized bidiagonal factorization in terms of the truncations of the corresponding bidiagonal factors
\[
T^{[N]}=L_1^{[N]}\cdots  L_p^{[N]}\Delta^{[N]} U_q^{[N]}\cdots U_1^{[N]}.
\]
Hence, $T^{[N]}$ are oscillatory matrices and, consequently,
its eigenvalues $\lambda_k^{[N]}$, $k\in\{0,1,\dots,N\}$, are simple and positive. These eigenvalues are the zeros of the characteristic polynomial $P_{N+1}(x)$. We also order them as
\[
\lambda_0^{[N]} > \lambda_1^{[N]} > \dots > \lambda_{N}^{[N]}.
\]

\begin{defi}
	Given any semi-infinite vector
	\[
	v=\begin{bNiceMatrix}
		v_0\\v_1\\\Vdots[shorten-end=5pt]
	\end{bNiceMatrix}
	\]
	we define a corresponding diagonal semi-infinite matrix with diagonal entries precisely the vector entries:
	\[
	\delta( v)\coloneq\diag(v_0,v_1,\dots).
	\]
	
	Given a positive bidiagonal factorization $T=\mathscr L_1\dots \mathscr L_p\mathscr U_q\cdots \mathscr U_1$, let us introduce recursively the following positive diagonal matrices
	\[
	\begin{aligned}
		\delta_j &\coloneq \delta(\mathscr U_j \delta_{j-1}e), & j&\in\{1,\dots, q\},\\
		\delta_{q+i}&\coloneq \delta(\mathscr L_{p+1-i}\delta_{q+i-1}e), & i&\in\{1,\dots,p\},
	\end{aligned}
	\]
	with the understanding that $\delta_0=I$, being $I$ the semi-infinite identity matrix.
\end{defi}
These diagonal matrices are the key to construct a stochastic bidiagonal factorization from a given positive bidiagonal factorization.

\begin{pro}
	Given a stochastic banded matrix $T$ with a positive bidiagonal factorization $T=\mathscr L_1\cdots \mathscr L_p\mathscr U_q\cdots \mathscr U_1$ there exists a positive stochastic bidiagonal factorization
	\[
	T=	\hat{\mathscr L}_1\cdots\hat{\mathscr L}_p\hat{\mathscr U}_q\cdots \hat{\mathscr U}_1
	\]
	where $\hat{\mathscr L}_i$, $i\in\{1,\dots,p\}$, $\hat{\mathscr U}_j$, $j\in\{1,\dots,q\}$ are bidiagonal stochastic semi-infinite matrices given by
	\[
	\begin{aligned}
		\hat{\mathscr U}_j &= \delta_j^{-1} \mathscr U_j \delta_{j-1}, & j&\in\{1,\dots,q\},\\
		\hat{\mathscr L}_i&= \delta_{q+p-i+1}^{-1} \mathscr L_i \delta_{q+p-i},& i&\in\{1,\dots,p\}.
	\end{aligned}
	\]
\end{pro}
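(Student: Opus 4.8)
The plan is to introduce the auxiliary vectors $v_j\coloneq \delta_j e$ and to track them through the recursion defining the $\delta_j$, exploiting the elementary identity $\delta(w)e=w$, valid for any semi-infinite vector $w$. Feeding this identity into the definition of $\delta_j$ turns the recursion into a first-order recursion for the vectors themselves: $v_j=\mathscr U_j v_{j-1}$ for $j\in\{1,\dots,q\}$ and $v_{q+i}=\mathscr L_{p+1-i}v_{q+i-1}$ for $i\in\{1,\dots,p\}$, starting from $v_0=\delta_0 e=e$. Unwinding these telescoping recursions yields $v_q=(\mathscr U_q\cdots\mathscr U_1)e$ and then $v_{q+p}=(\mathscr L_1\cdots\mathscr L_p)v_q=(\mathscr L_1\cdots\mathscr L_p)(\mathscr U_q\cdots\mathscr U_1)e=Te$. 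Since $T$ is stochastic, $Te=e$, whence $v_{q+p}=e$, i.e.\ $\delta_{q+p}=I$. This identity is the linchpin of the whole argument.

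Next I would record that every $\delta_j$ is a positive, hence invertible, diagonal matrix. Starting from $v_0=e>0$ and using that each $\mathscr U_j$ and each $\mathscr L_i$ has positive diagonal and nonnegative off-diagonal entries, the recursion above shows inductively that $v_j>0$ for all $j$. Consequently the conjugations $\delta_j^{-1}\mathscr U_j\delta_{j-1}$ and $\delta_{q+p-i+1}^{-1}\mathscr L_i\delta_{q+p-i}$ are well defined, and since conjugation by a positive diagonal matrix merely rescales rows and columns, it preserves both the (upper, resp.\ lower) bidiagonal zero pattern and the positivity of the surviving entries. Thus $\hat{\mathscr U}_j$ and $\hat{\mathscr L}_i$ are genuine positive bidiagonal matrices.

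To verify stochasticity of the factors I would compute their row-sum vectors directly. For the upper factors, $\hat{\mathscr U}_j e=\delta_j^{-1}\mathscr U_j\delta_{j-1}e=\delta_j^{-1}\mathscr U_j v_{j-1}=\delta_j^{-1}v_j=e$, where the last step uses $v_j=\delta_j e$. For the lower factors the only subtlety is index matching: rewriting the $\mathscr L$-recursion as $\mathscr L_i v_{q+p-i}=v_{q+p-i+1}$ (obtained by choosing $p+1-i$ as the recursion index) gives $\hat{\mathscr L}_i e=\delta_{q+p-i+1}^{-1}\mathscr L_i v_{q+p-i}=\delta_{q+p-i+1}^{-1}v_{q+p-i+1}=e$. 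Hence all factors are stochastic.

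Finally I would assemble the product and check that it reproduces $T$. The intermediate diagonals telescope: $\hat{\mathscr U}_q\cdots\hat{\mathscr U}_1=\delta_q^{-1}(\mathscr U_q\cdots\mathscr U_1)$ and $\hat{\mathscr L}_1\cdots\hat{\mathscr L}_p=\delta_{q+p}^{-1}(\mathscr L_1\cdots\mathscr L_p)\delta_q$, so their product equals $\delta_{q+p}^{-1}(\mathscr L_1\cdots\mathscr L_p)(\mathscr U_q\cdots\mathscr U_1)=\delta_{q+p}^{-1}T=T$, using $\delta_{q+p}=I$ from the first step. The main thing to get right is the careful bookkeeping of the reversed ordering of the $\mathscr L$-factors (where $\mathscr L_{p+1-i}$ appears in the recursion but $\mathscr L_i$ in the factorization); once the identity $\delta_{q+p}=I$ is secured, both the stochasticity checks and the telescoping collapse are routine.
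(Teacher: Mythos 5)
Your proposal is correct and follows essentially the same route as the paper's proof: both hinge on the recursion $\delta_j e=\mathscr U_j\delta_{j-1}e$ (resp.\ $\mathscr L_{p+1-i}\delta_{q+i-1}e$), the stochasticity of each conjugated factor, and the identity $\delta_{q+p}=I$ forced by $Te=e$. Your version merely reorganizes the bookkeeping by introducing the vectors $v_j=\delta_j e$ and making the positivity of the $\delta_j$ and the telescoping explicit, which the paper leaves implicit.
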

\begin{proof}
	First let us observe that we can write $\mathscr U_1=\delta_1 \hat {\mathscr U}_1$ where $\hat{ \mathscr U}_1=\delta_1^{-1}\mathscr U_1$ is a stochastic matrix, i.e. $\hat{\mathscr U}_1 e=e$. Then $\mathscr U_2\mathscr U_1 =\mathscr U_2\delta_1\hat{\mathscr U}_1=\delta_2 \hat{\mathscr U}_2\hat{\mathscr U}_1$ where $\hat{\mathscr U}_2=\delta_2^{-1}\mathscr U_2\delta_1$ is again a stochastic matrix, i.e. $\hat{\mathscr U}_2 e=e$. In the next step, we find $\mathscr U_3\mathscr U_2\mathscr U_1= \mathscr U_3 \delta _2 \hat{\mathscr U}_2\hat{\mathscr U}_1=\delta_3\hat{\mathscr U}_3\hat{\mathscr U}_2\hat{\mathscr U}_1$ where $\hat{\mathscr U}_3=\delta_3^{-1}\mathscr U_3 \delta_2$ is again, by construction, a stochastic matrix. Iterating this process we get to $\mathscr U_q\cdots \mathscr U_1 =\delta_q\hat{\mathscr U}_q\cdots \hat{\mathscr U}_1$ where all the matrices $\hat{\mathscr U}_j$, $j\in\{1,\dots,q\}$, are stochastic.
	
	Now, we deal with $\mathscr L_p\mathscr U_q\cdots \mathscr U_1=\mathscr L_p\delta_q\hat{\mathscr U}_q\cdots \hat{\mathscr U}_1= \delta_{q+1}\hat{\mathscr L}_p\hat{\mathscr U}_q\cdots \hat{\mathscr U}_1$, where by construction $\hat{\mathscr L}_p=\delta_{q+1}^{-1}\mathscr L_p\delta_q$ is again a stochastic matrix. Iterating again this process we conclude that
	\[
	T=\delta_{q+p}\hat{\mathscr L}_1\cdots\hat{\mathscr L}_p\hat{\mathscr U}_q\cdots \hat{\mathscr U}_1
	\]
	where each factor $\hat{\mathscr L}_i$, $i\in\{1,\dots, p\}$ and $\hat{\mathscr U}_j$, $j\in\{1,\dots,q\}$ is a positive bidiagonal stochastic matrix. Then, $Te=\delta_{q+p}e$, but by assumption $T$ is stochastic and therefore $Te=e$. Consequently, $\delta_{q+p}e=e$ and $\delta_{q+p}=I$.
\end{proof}

\begin{rem}
	This applies to semi-infinite or finite banded matrices.
\end{rem}

%

\subsection{Finite Markov chains}

If $T$ is stochastic, the truncations $T^{[N]}$ need not be stochastic: they are \emph{substochastic}, i.e. all entries are nonnegative and the sum of the entries in some rows may be strictly less than $1$. Nevertheless, recalling that the finite matrix $T^{[N]}$ is nonnegative and nonsingular, the Perron--Frobenius theorem ensures the existence of an eigenvalue, which is positive, simple and equal to the spectral radius, say $\lambda^{[N]}_0$, and a corresponding eigenvector $u_0^{\langle N\rangle}$ with all its entries
\[
u_{0,n}^{\langle N\rangle}=B_{n-1}^{(1)}\big(\lambda^{[N]}_0\big)\rho^{[N]}_{0,1} +\cdots +B_{n-1}^{(q)}\big(\lambda^{[N]}_0\big)\rho^{[N]}_{0,q}
\]
positive, i.e.,
\[
T^{[N]}u_0^{\langle N\rangle}=\lambda_0^{[N]}u_0^{\langle N\rangle}.
\]
Notice that $u_0^{\langle N\rangle}=\delta(u_0^{\langle N\rangle})e^{[N]}$.

Here we have used the multiple orthogonal polynomials $B_{n-1}^{(1)}(x)$ and the Christoffel numbers $\rho^{[N]}_{0,q}$ (which are positive), see Appendix \ref{appendix:MOP}.

\begin{pro}
	The matrix
	\[
	\hat T^{[N]}\coloneq \frac{1}{\lambda_0^{[N]}}\delta(u_0^{\langle N\rangle})^{-1} T^{[N]} \delta(u_0^{\langle N\rangle})
	\]
	is stochastic.
\end{pro}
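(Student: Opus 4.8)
The plan is to verify the two defining properties of a stochastic matrix separately: entrywise nonnegativity, and unit row sums in the form $\hat T^{[N]}e^{[N]}=e^{[N]}$. Throughout I would abbreviate $D\coloneq\delta(u_0^{\langle N\rangle})$, the diagonal matrix whose diagonal entries are the components $u_{0,n}^{\langle N\rangle}$ of the Perron eigenvector. These are strictly positive by the Perron--Frobenius theorem, so $D$ is invertible with positive diagonal, and since the Perron eigenvalue $\lambda_0^{[N]}$ is likewise positive, the scalar prefactor $1/\lambda_0^{[N]}$ is well defined and positive.

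For nonnegativity, I would simply read off the entries of the conjugated matrix. Writing $u_n\coloneq u_{0,n}^{\langle N\rangle}>0$, the $(i,j)$ entry of $D^{-1}T^{[N]}D$ equals $u_i^{-1}T^{[N]}_{i,j}u_j$, whence the $(i,j)$ entry of $\hat T^{[N]}$ is $\tfrac{1}{\lambda_0^{[N]}}\tfrac{u_j}{u_i}T^{[N]}_{i,j}$. As $T^{[N]}_{i,j}\geq0$ and all of $u_i,u_j,\lambda_0^{[N]}$ are positive, every entry is nonnegative; nonsingularity of $T^{[N]}$ prevents any degeneracy.

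The substantive point is the row-sum identity, where the two facts recorded just before the statement do all the work. Using the remark $u_0^{\langle N\rangle}=D e^{[N]}$, I would compute
\[
\hat T^{[N]}e^{[N]}=\frac{1}{\lambda_0^{[N]}}D^{-1}T^{[N]}D\, e^{[N]}=\frac{1}{\lambda_0^{[N]}}D^{-1}T^{[N]}u_0^{\langle N\rangle}.
\]
The eigenvector relation $T^{[N]}u_0^{\langle N\rangle}=\lambda_0^{[N]}u_0^{\langle N\rangle}$ then collapses the right-hand side to $D^{-1}u_0^{\langle N\rangle}$, and since $D^{-1}u_0^{\langle N\rangle}=\delta(u_0^{\langle N\rangle})^{-1}\delta(u_0^{\langle N\rangle})e^{[N]}=e^{[N]}$, I obtain $\hat T^{[N]}e^{[N]}=e^{[N]}$, as required.

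I do not expect a genuine obstacle: the whole argument is a diagonal conjugation that turns the Perron eigenvector into the constant vector $e^{[N]}$, while the rescaling by $1/\lambda_0^{[N]}$ normalizes the eigenvalue to $1$. The only points needing care are that $D$ be invertible and the prefactor finite — both guaranteed by the strict positivity from Perron--Frobenius — and that one use the eigenvector relation in exactly the orientation stated, namely as a \emph{right} eigenvector of $T^{[N]}$, which is precisely what makes $D e^{[N]}=u_0^{\langle N\rangle}$ the relevant identity.
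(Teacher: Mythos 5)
Your proposal is correct and follows essentially the same route as the paper: the row-sum identity via $\delta(u_0^{\langle N\rangle})e^{[N]}=u_0^{\langle N\rangle}$ and the Perron eigenvector relation is exactly the paper's computation. Your explicit check of entrywise nonnegativity is a small addition the paper leaves implicit, but it does not change the argument.
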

\begin{proof}
	It follows from
	\[
	\begin{aligned}
		\hat T^{[N]} e^{[N]}&=\frac{1}{\lambda_0^{[N]}}\delta(u_0^{\langle N\rangle})^{-1} T^{[N]} \delta(u_0^{\langle N\rangle}) e^{[N]}
		=\frac{1}{\lambda_0^{[N]}}\delta(u_0^{\langle N\rangle})^{-1} T^{[N]} u_0^{\langle N\rangle}
		\\& =\frac{1}{\lambda_0^{[N]}}\delta(u_0^{\langle N\rangle})^{-1} \lambda^{[N]}_0u_0^{\langle N\rangle}
		=\delta(u_0^{\langle N\rangle})^{-1} u_0^{\langle N\rangle}=e^{[N]}.
	\end{aligned}
	\]
\end{proof}

This normalization can be viewed as a discrete-time Doob $h$-transform
(with $h=u_0^{\langle N\rangle}$), which turns the semi-stochastic truncation
into a genuine Markov transition matrix, see \cite{Doob1957,Doob1984,LevinPeresWilmer2017}.

\begin{pro}
	The finite Markov chain with transition matrix $\hat T^{[N]}$ has only one class (hence it is irreducible).
\end{pro}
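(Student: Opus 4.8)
The plan is to reduce irreducibility to a purely combinatorial statement about the sparsity pattern of $\hat T^{[N]}$, and then to read off that pattern from the band structure. The key observation is that the normalization defining $\hat T^{[N]}$ is a positive diagonal similarity followed by multiplication by the positive scalar $1/\lambda_0^{[N]}$. Since $u_0^{\langle N\rangle}$ is the Perron eigenvector and hence has strictly positive entries, the diagonal matrix $\delta(u_0^{\langle N\rangle})$ is invertible with positive diagonal, so conjugation and positive scaling preserve the zero/nonzero pattern entrywise. I would therefore first record that
\[
(\hat T^{[N]})_{i,j}>0 \iff (T^{[N]})_{i,j}>0, \qquad i,j\in\{0,\dots,N\}.
\]
Consequently $\hat T^{[N]}$ and $T^{[N]}$ share the same associated directed graph, and since the communicating-class structure of a Markov chain depends only on which single-step transition probabilities are positive (a path $i\to\cdots\to j$ exists iff the corresponding product of entries is positive), it suffices to prove that this common graph is strongly connected.

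Next I would make the sparsity pattern explicit. Because $T$ is a $(p,q)$-banded matrix with all band entries strictly positive and $p,q\ge 1$, the truncation satisfies $(T^{[N]})_{i,j}>0$ precisely when $-p\le j-i\le q$, intersected with $0\le i,j\le N$. In particular the first superdiagonal and the first subdiagonal are strictly positive, and these survive the truncation even in the boundary rows $0,\dots,p-1$ and $N-q+1,\dots,N$, where only the outer parts of the band are cut off. Thus $(T^{[N]})_{i,i+1}>0$ for every $i<N$ and $(T^{[N]})_{i,i-1}>0$ for every $i>0$, giving nearest-neighbour edges $i\to i+1$ and $i\to i-1$ throughout the state space.

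Strong connectivity then follows by chaining these nearest-neighbour edges: using the superdiagonal edges one passes from any state to any larger state via $i\to i+1\to\cdots\to j$ for $j>i$, and using the subdiagonal edges one passes from any state to any smaller state via $i\to i-1\to\cdots\to j$ for $j<i$. Hence every ordered pair of states communicates, the directed graph is strongly connected, and $\hat T^{[N]}$ has a single communicating class, so the chain is irreducible. I do not anticipate a genuine obstacle here, as the argument is essentially combinatorial; the only point requiring care is to confirm that the first off-diagonals are never removed by the boundary truncation, which holds as soon as $p,q\ge1$, so the chaining argument applies without modification.
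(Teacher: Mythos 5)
Your proposal is correct and follows essentially the same route as the paper: the positivity of all entries within the band (coming from the positive bidiagonal factorization) forces all states to communicate, the paper phrasing this as ``successive powers enlarge the band'' while you chain the first sub- and super-diagonal edges explicitly. Your additional observation that the positive diagonal conjugation defining $\hat T^{[N]}$ preserves the zero/nonzero pattern of $T^{[N]}$ is a useful explicit justification of a step the paper leaves implicit.
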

\begin{proof}
	As the matrices have a PBF, all the entries within the band are positive, and successive powers enlarge the width of the band. Hence, there exists $n\in\N$ such that $((\hat T^{[N]})^{n})_{i,j}>0$ for any pair of states $i,j\in\{0,1,\dots,N\}$, and therefore all states communicate.
\end{proof}

\begin{pro}
	The finite Markov chain with transition matrix $\hat T^{[N]}$ is aperiodic.
\end{pro}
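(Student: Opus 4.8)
The plan is to reduce aperiodicity to the positivity of the diagonal entries of $\hat T^{[N]}$ and then invoke Remark~\ref{condicionsuficienteaperiodicidad}. Recall that a state $i$ with $(\hat T^{[N]})_{i,i}>0$ is automatically aperiodic, since $r=1$ then belongs to the set of return times and the greatest common divisor of any set containing $1$ equals $1$. Thus it suffices to establish that every diagonal entry of $\hat T^{[N]}$ is strictly positive, and this will give aperiodicity of each state directly, without even appealing to the class structure.

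First I would record how the diagonal conjugation acts on the diagonal. Writing $\delta(u_0^{\langle N\rangle})=\diag\big(u_{0,0}^{\langle N\rangle},\dots,u_{0,N}^{\langle N\rangle}\big)$, a diagonal similarity leaves diagonal entries untouched, so the only surviving effect is the Perron scaling:
\[
(\hat T^{[N]})_{i,i}=\frac{1}{\lambda_0^{[N]}}\,\frac{1}{u_{0,i}^{\langle N\rangle}}\,T^{[N]}_{i,i}\,u_{0,i}^{\langle N\rangle}=\frac{1}{\lambda_0^{[N]}}\,T^{[N]}_{i,i}.
\]
Since $\lambda_0^{[N]}>0$ is the Perron eigenvalue, this reduces the claim to the positivity of the diagonal entries $T^{[N]}_{i,i}=T_{i,i}$ of the truncation.

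The positivity of $T_{i,i}$ follows from the normalized factorization $T=L_1\cdots L_p\,\Delta\,U_q\cdots U_1$ established earlier. Each $L_k$ and each $U_\ell$ carries ones on its main diagonal, $\Delta$ is a positive diagonal matrix, and all factor entries are nonnegative. Extracting from the matrix product the single contribution in which the diagonal entry of every factor is selected yields the term $1\cdots 1\cdot\Delta_{i,i}\cdot 1\cdots 1=\Delta_{i,i}>0$, while every remaining contribution is nonnegative. Hence $T_{i,i}\ge \Delta_{i,i}>0$, and consequently $(\hat T^{[N]})_{i,i}>0$ for every $i\in\{0,1,\dots,N\}$.

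Combining these observations, every state of the chain with transition matrix $\hat T^{[N]}$ has a strictly positive probability of remaining in place in one step, so by Remark~\ref{condicionsuficienteaperiodicidad} each state is aperiodic; since the chain is irreducible by the preceding proposition, all states share a common period and the chain is aperiodic. I expect no genuine obstacle here: the only point meriting care is checking that the diagonal contribution $\Delta_{i,i}$ indeed survives the product, which is immediate once one notes that the bidiagonal factors are normalized with unit diagonal and all entries are nonnegative.
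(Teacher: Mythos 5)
Your proof is correct and follows essentially the same route as the paper: the paper likewise observes that the PBF forces all diagonal entries of $\hat T^{[N]}$ to be positive, so $r=1$ is a return time and every state has period $1$. You merely supply the details the paper leaves implicit, namely that the diagonal conjugation only rescales the diagonal by $1/\lambda_0^{[N]}>0$ and that the term selecting the unit diagonal of each bidiagonal factor gives $T_{i,i}\ge\Delta_{i,i}>0$.
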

\begin{proof}
	Having the transition matrix $\hat T^{[N]}$ a PBF, it has all its diagonal entries positive, $(\hat T^{[N]})_{i,i}>0$. Therefore, the set of return times contains $1$, and the period of each state is $1$.
\end{proof}

\begin{pro}
	The stochastic matrix $\hat T^{[N]}$ has a stochastic bidiagonal factorization.
\end{pro}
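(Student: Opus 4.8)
The plan is to reduce this statement to the earlier proposition guaranteeing a stochastic bidiagonal factorization for any stochastic banded matrix that possesses a positive bidiagonal factorization (and whose accompanying remark records that it applies equally in the finite case). Since it has already been established that $\hat T^{[N]}$ is stochastic, it suffices to exhibit \emph{some} positive bidiagonal factorization of $\hat T^{[N]}$; the desired stochastic factorization then follows by invoking that proposition.

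To produce such a factorization I would start from the known normalized factorization of the truncation,
\[
T^{[N]}=L_1^{[N]}\cdots L_p^{[N]}\Delta^{[N]}U_q^{[N]}\cdots U_1^{[N]},
\]
and substitute it into the definition $\hat T^{[N]}=\frac{1}{\lambda_0^{[N]}}\delta(u_0^{\langle N\rangle})^{-1}T^{[N]}\delta(u_0^{\langle N\rangle})$. Writing $D\coloneq\delta(u_0^{\langle N\rangle})$ and inserting $DD^{-1}=I$ between consecutive factors, the conjugation distributes across the product as
\[
\hat T^{[N]}=\frac{1}{\lambda_0^{[N]}}\bigl(D^{-1}L_1^{[N]}D\bigr)\cdots\bigl(D^{-1}L_p^{[N]}D\bigr)\bigl(D^{-1}\Delta^{[N]}D\bigr)\bigl(D^{-1}U_q^{[N]}D\bigr)\cdots\bigl(D^{-1}U_1^{[N]}D\bigr).
\]

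The key observation is that conjugation by a positive diagonal matrix preserves bidiagonal structure and positivity. Indeed, for a lower bidiagonal factor one has $(D^{-1}L D)_{n,m}=\frac{u_{0,m}^{\langle N\rangle}}{u_{0,n}^{\langle N\rangle}}L_{n,m}$, so the diagonal entries (all equal to $1$, since the factors are normalized) are unchanged, the subdiagonal entries are merely rescaled by positive ratios of consecutive entries of $u_0^{\langle N\rangle}$, and all remaining entries vanish; the analogous statement holds for the upper bidiagonal factors. Because the Perron--Frobenius eigenvector $u_0^{\langle N\rangle}$ has strictly positive entries, each conjugated factor is again a normalized positive bidiagonal matrix. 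The diagonal factor is fixed by conjugation, $D^{-1}\Delta^{[N]}D=\Delta^{[N]}$, and absorbing the positive scalar $1/\lambda_0^{[N]}$ into it yields a positive diagonal matrix. This exhibits a genuine positive bidiagonal factorization of $\hat T^{[N]}$, to which the stochastic bidiagonal factorization proposition applies.

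I do not anticipate a serious obstacle: the substance is simply that the Doob-type diagonal conjugation respects the bidiagonal factorization, which is immediate once one notes that diagonal conjugation acts entrywise on the factors and leaves the diagonal part untouched. The only point requiring any care is the positivity of the rescaled off-diagonal entries, and this is guaranteed precisely by the strict positivity of the Perron--Frobenius eigenvector $u_0^{\langle N\rangle}$; with that in hand the conclusion is a direct application of the earlier result to the stochastic matrix $\hat T^{[N]}$.
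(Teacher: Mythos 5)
Your proposal is correct and follows essentially the same route as the paper: both reduce the statement to the earlier proposition that a stochastic banded matrix with a positive bidiagonal factorization admits a stochastic one. You additionally spell out why $\hat T^{[N]}$ inherits a positive bidiagonal factorization from $T^{[N]}$ (via the diagonal conjugation by $\delta(u_0^{\langle N\rangle})$ and absorption of $1/\lambda_0^{[N]}$ into the diagonal factor), a detail the paper's one-line proof simply asserts; your version is therefore slightly more complete, and the argument is sound.
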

\begin{proof}
	Since $\hat T^{[N]}$ is stochastic and admits a positive bidiagonal factorization, the construction above yields a stochastic bidiagonal factorization.
\end{proof}

In terms of eigenvalues $\lambda^{[N]}_j$, and left and right eigenvector entries $u_{j,n}^{\langle N\rangle},w_{j,m}^{\langle N\rangle}$ of $T^{[N]}$, see \eqref{eq:eigenvectors}, and the spectral measure $\d\psi^{[N]}_{b,a}$, see \eqref{eq:discrete_mesures}, an extension of Karlin and McGregor for simple Markov chains is:
\begin{pro}
	The spectral representation of these finite stochastic matrices is
	\[
	\begin{aligned}
		((\hat T^{[N]})^k)_{n,m}&=
		\frac{u_{0,m}^{\langle N\rangle}}{u_{0,n}^{\langle N\rangle}}
		\sum_{a=1}^p\sum_{b=1}^{q}	\int B^{(b)}_n(x)x_N^k\d\psi^{[N]}_{b,a}(x)A^{(a)}_{m}(x)\\&=
		\frac{u_{0,m}^{\langle N\rangle}}{u_{0,n}^{\langle N\rangle}}	\sum_{j=0}^N
		u_{j,n}^{\langle N\rangle}\left(\frac{\lambda^{[N]}_j}{\lambda_0^{[N]}}\right)^kw_{j,m}^{\langle N\rangle}
		\\&=u_{0,m}^{\langle N\rangle}w_{0,m}^{\langle N\rangle}+
		\frac{u_{0,m}^{\langle N\rangle}}{u_{0,n}^{\langle N\rangle}}	\sum_{j=1}^N u_{j,n}^{\langle N\rangle}\left(\frac{\lambda^{[N]}_j}{\lambda_0^{[N]}}\right)^kw_{j,m}^{\langle N\rangle}
	\end{aligned}
	\]
	for $n,m\in\{0,\dots,N\}$.
\end{pro}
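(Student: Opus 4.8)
The plan is to reduce everything to the already established spectral representation of the oscillatory truncation $T^{[N]}$ and then transport it through the diagonal conjugation that defines $\hat T^{[N]}$. First I would record that, by the definition $\hat T^{[N]}=\frac{1}{\lambda_0^{[N]}}\delta(u_0^{\langle N\rangle})^{-1}T^{[N]}\delta(u_0^{\langle N\rangle})$, both the similarity and the scalar factor survive under taking powers, since $(\delta(u_0^{\langle N\rangle})^{-1}T^{[N]}\delta(u_0^{\langle N\rangle}))^k=\delta(u_0^{\langle N\rangle})^{-1}(T^{[N]})^k\delta(u_0^{\langle N\rangle})$. Thus
\[
(\hat T^{[N]})^k=\frac{1}{(\lambda_0^{[N]})^k}\,\delta(u_0^{\langle N\rangle})^{-1}(T^{[N]})^k\,\delta(u_0^{\langle N\rangle}),
\]
and reading off the $(n,m)$ entry, with $\delta(u_0^{\langle N\rangle})=\diag(u_{0,0}^{\langle N\rangle},u_{0,1}^{\langle N\rangle},\dots)$,
\[
((\hat T^{[N]})^k)_{n,m}=\frac{u_{0,m}^{\langle N\rangle}}{u_{0,n}^{\langle N\rangle}}\,\frac{((T^{[N]})^k)_{n,m}}{(\lambda_0^{[N]})^k}.
\]
This elementary step is the organizing one: the prefactor $u_{0,m}^{\langle N\rangle}/u_{0,n}^{\langle N\rangle}$ and the normalizing powers of $\lambda_0^{[N]}$ appearing in the statement are precisely the footprints of this Doob-type conjugation.

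Next I would invoke the spectral Favard theorem for the oscillatory matrix $T^{[N]}$, established in \cite{BFM1,BFM4} and recorded through \eqref{eq:discrete_mesures}, to express the entries of the $k$-th power as a biorthogonal quadratic form in the mixed multiple orthogonal polynomials against the discrete matrix of measures,
\[
((T^{[N]})^k)_{n,m}=\sum_{a=1}^p\sum_{b=1}^q\int B^{(b)}_n(x)\,x^k\,\d\psi^{[N]}_{b,a}(x)\,A^{(a)}_m(x).
\]
Dividing by $(\lambda_0^{[N]})^k$ absorbs the normalizing power into the integrand as $(x/\lambda_0^{[N]})^k=x_N^k$, which together with the previous step yields the first displayed line. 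I would then use the discreteness of $\d\psi^{[N]}_{b,a}$, supported on the simple eigenvalues $\lambda_0^{[N]}>\cdots>\lambda_N^{[N]}$ with weights built from the Christoffel numbers $\rho^{[N]}_{j,a}$, together with the eigenvector formulas \eqref{eq:eigenvectors} that write $u_{j,n}^{\langle N\rangle}$ as a $\rho$-combination of the $B^{(b)}$ evaluated at $\lambda_j^{[N]}$ and $w_{j,m}^{\langle N\rangle}$ in terms of the $A^{(a)}$, to collapse the integral into the eigenvalue sum
\[
\sum_{a,b}\int B^{(b)}_n(x)\,x_N^k\,\d\psi^{[N]}_{b,a}(x)\,A^{(a)}_m(x)=\sum_{j=0}^N u_{j,n}^{\langle N\rangle}\left(\frac{\lambda_j^{[N]}}{\lambda_0^{[N]}}\right)^k w_{j,m}^{\langle N\rangle},
\]
which, after reinstating the prefactor, is the second line. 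For the third line I would isolate the Perron term $j=0$: since $(\lambda_0^{[N]}/\lambda_0^{[N]})^k=1$, it equals $\frac{u_{0,m}^{\langle N\rangle}}{u_{0,n}^{\langle N\rangle}}u_{0,n}^{\langle N\rangle}w_{0,m}^{\langle N\rangle}=u_{0,m}^{\langle N\rangle}w_{0,m}^{\langle N\rangle}$, independent of $k$ and of $n$, while the remaining $j\geq 1$ terms carry the decaying ratios $(\lambda_j^{[N]}/\lambda_0^{[N]})^k<1$; this is exactly the split recorded in the last line, and it is what will later drive the geometric convergence to the stationary distribution.

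The main obstacle is the middle step: reconciling the two faces of the spectral theorem, namely verifying that the Favard integral against $\d\psi^{[N]}_{b,a}$ reproduces exactly the biorthogonal eigenvector sum $\sum_j(\lambda_j^{[N]})^k u_{j,n}^{\langle N\rangle}w_{j,m}^{\langle N\rangle}$. Concretely, this requires that the left and right recursion polynomials evaluated at the eigenvalues furnish genuine left and right eigenvectors of $T^{[N]}$, and that the Christoffel weights entering \eqref{eq:discrete_mesures} are normalized so that the rank-one pieces $u_j w_j^{\top}$ are the spectral projectors, i.e.\ $\sum_n u_{j,n}^{\langle N\rangle}w_{j',n}^{\langle N\rangle}=\delta_{j,j'}$. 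Once this biorthonormality is in hand, the diagonalizability of $T^{[N]}$, guaranteed by the simplicity of its spectrum, which is itself a consequence of the oscillation theory applied to the bidiagonal factorization, makes the passage from the integral form to the eigenvalue sum a formal identity.
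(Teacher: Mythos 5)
Your proof is correct and follows essentially the same route the paper intends: the proposition is stated there without an explicit proof, the justification being exactly your chain of the Doob-type conjugation identity $((\hat T^{[N]})^k)_{n,m}=\frac{u_{0,m}^{\langle N\rangle}}{u_{0,n}^{\langle N\rangle}}\,((T^{[N]})^k)_{n,m}/(\lambda_0^{[N]})^k$ combined with the appendix's discrete spectral representation of $(T^{[N]})^k$ and the biorthogonality $w^{\langle N\rangle}_k u^{\langle N\rangle}_l=\delta_{k,l}$ recorded in \eqref{eq:eigenvectors} and \eqref{eq:UDnW=Jn}. The ``obstacle'' you flag at the end is already settled by those appendix identities, so nothing is missing.
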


We introduce a new variable
\[
x_N\coloneq \frac{x}{\lambda_0^{[N]}}.
\]
Then, in terms of the recurrence polynomials $A^{(a)}_n(x),B^{(b)}_n(x)$ of the stochastic banded matrix $T$ and the spectral objects related to $T$, see Appendix \ref{appendix:MOP}, we find:

\begin{pro}
	For $|s|<1$, the corresponding transition probability generating functions are
	\[
	\begin{aligned}
		P_{m,n}(s)=	\frac{u_{0,n}^{\langle N\rangle}}{u_{0,m}^{\langle N\rangle}}\sum_{a=1}^p\sum_{b=1}^{q}	\int B^{(b)}_n(x)\frac{\d\psi^{[N]}_{b,a}}{1-sx_N}(x)A^{(a)}_{m}(x)
	\end{aligned}
	\]
	for $n,m\in\{0,\dots,N\}$.
	For $m\neq n$, the first passage generating functions are
	\begin{align*}
		F_{m,n}^{[N]}(s)&=\frac{u_{0,n}^{\langle N\rangle}}{u_{0,m}^{\langle N\rangle}}
		\dfrac{\sum_{a=1}^p\sum_{b=1}^{q}	\int B^{(b)}_n(x)\frac{\d\psi^{[N]}_{b,a}}{1-sx_N}(x)A^{(a)}_{m}(x)}{\sum_{a=1}^p\sum_{b=1}^{q}	\int B^{(b)}_n(x)\frac{\d\psi^{[N]}_{b,a}}{1-sx_N}(x)A^{(a)}_{n}(x)}
	\end{align*}
	for $n,m\in\{0,\dots,N\}$.
	For the case $n=m$, the first passage generating functions are
	\begin{align*}
		F_{m}^{[N]}(s)=1-\dfrac{1}{\sum_{a=1}^p\sum_{b=1}^{q}	\int B^{(b)}_m(x)\frac{\d\psi^{[N]}_{b,a}}{1-sx_N}(x)A^{(a)}_{m}(x)}.
	\end{align*}
	for $m\in\{0,\dots,N\}$.
	
\end{pro}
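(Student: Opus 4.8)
The plan is to read all three generating functions off the spectral representation of $((\hat T^{[N]})^k)_{n,m}$ obtained in the preceding proposition, combined with the two elementary renewal identities for Markov chains recalled in the introduction. I would begin with the transition probability generating function. By definition $P_{m,n}(s)=\sum_{r=0}^\infty ((\hat T^{[N]})^r)_{m,n}s^r$, so I substitute the spectral representation for the iterated entry and interchange the (finite) summation over the spectral support with the summation over $r$. The only $r$-dependence is carried by the factor $x_N^r$, which assembles into the geometric series $\sum_{r=0}^\infty (s x_N)^r=\frac{1}{1-s x_N}$; this is exactly the kernel $\frac{\d\psi^{[N]}_{b,a}}{1-sx_N}$ appearing in the statement. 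Keeping track of the diagonal-conjugation prefactor $u_{0,n}^{\langle N\rangle}/u_{0,m}^{\langle N\rangle}$ then produces the claimed formula for $P_{m,n}(s)$.

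Both the interchange and the convergence need only a one-line justification. Each $\d\psi^{[N]}_{b,a}$ is a discrete measure supported on the finite spectrum $\{\lambda_0^{[N]},\dots,\lambda_N^{[N]}\}$, so every integral is a finite sum and the double summation may be reordered freely. Moreover $x_N=x/\lambda_0^{[N]}$ and $0<\lambda_N^{[N]}<\cdots<\lambda_0^{[N]}$, so on the support one has $x_N\in(0,1]$, the value $1$ being attained only at the Perron eigenvalue. Hence for $|s|<1$ we get $|s x_N|<1$ at every support point and each geometric series converges, which is precisely the stated hypothesis $|s|<1$.

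For the first-passage functions I would invoke the two standard renewal relations. When $n=m$ the identity $F_m^{[N]}(s)=1-1/P_{m,m}(s)$ recalled in the introduction applies verbatim; since the prefactor collapses to $u_{0,m}^{\langle N\rangle}/u_{0,m}^{\langle N\rangle}=1$, inserting the already-derived $P_{m,m}(s)$ gives the stated diagonal formula. When $m\neq n$ I would use the first-passage decomposition $P_{m,n}(s)=F_{m,n}^{[N]}(s)P_{n,n}(s)$, reflecting that any path from $m$ to $n$ splits uniquely into a first visit to $n$ followed by an arbitrary excursion returning to $n$, and solve for $F_{m,n}^{[N]}(s)=P_{m,n}(s)/P_{n,n}(s)$. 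Dividing the two spectral expressions cancels the common $B^{(b)}_n$ factor and leaves exactly the quotient displayed, with the surviving prefactor $u_{0,n}^{\langle N\rangle}/u_{0,m}^{\langle N\rangle}$.

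Once the spectral representation is available the computation is essentially routine, and I expect the only point requiring genuine care to be the renewal decomposition $P_{m,n}(s)=F_{m,n}^{[N]}(s)P_{n,n}(s)$ for $m\neq n$: this has to be argued directly for the chain $\hat T^{[N]}$, by decomposing each trajectory from $m$ to $n$ according to the epoch of its first visit to $n$, rather than read off from the spectral data. It is a classical renewal argument, so I do not anticipate serious difficulty; everything else reduces to summing a geometric series and bookkeeping the conjugation prefactors $u_{0,\cdot}^{\langle N\rangle}$.
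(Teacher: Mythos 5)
Your proposal is correct and follows essentially the same route as the paper, whose proof is the one-line remark that the formulas ``follow from the spectral representation and the sum of the geometric series''; you simply make explicit the interchange of sums, the convergence for $|s|<1$, and the renewal identities $F_{m}^{[N]}(s)=1-1/P_{m,m}(s)$ and $P_{m,n}(s)=F_{m,n}^{[N]}(s)P_{n,n}(s)$ that the paper leaves implicit.
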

\begin{proof}
	It follows from the spectral representation and the sum of the geometric series.
\end{proof}

\begin{teo}[Recurrent chains]
	The finite Markov chain with transition matrix $\hat T^{[N]}$ is recurrent.
\end{teo}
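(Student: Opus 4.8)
The plan is to verify recurrence state-by-state through the diagonal return generating function $P_{m,m}(s)$, invoking the criterion of Remark~\ref{condicionsuficienteparalarecurrencia}, which states that $m$ is recurrent precisely when $\lim_{s\to1^-}P_{m,m}(s)=+\infty$. I would start from the closed form for $P_{m,m}(s)$ in the preceding Proposition, noting that for $n=m$ the prefactor $u_{0,n}^{\langle N\rangle}/u_{0,m}^{\langle N\rangle}$ collapses to $1$, so that
\[
P_{m,m}(s)=\sum_{a=1}^p\sum_{b=1}^q\int B_m^{(b)}(x)\,\frac{1}{1-sx_N}\,A_m^{(a)}(x)\,\d\psi_{b,a}^{[N]}(x).
\]

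Next I would exploit the discreteness of $\d\psi^{[N]}_{b,a}$, supported on the rescaled eigenvalues $x_N=\lambda_j^{[N]}/\lambda_0^{[N]}$, $j\in\{0,\dots,N\}$. Performing the integration and collecting the quadrature at each mass point turns the display into the diagonal Karlin--McGregor sum
\[
P_{m,m}(s)=\sum_{j=0}^N\frac{u_{j,m}^{\langle N\rangle}w_{j,m}^{\langle N\rangle}}{1-s\,\lambda_j^{[N]}/\lambda_0^{[N]}},
\]
the same expression obtained by summing the geometric series $\sum_k(\hat T^{[N]})^k_{m,m}s^k$ against the second form of the spectral representation. The crucial point is that the top node $j=0$ sits at $x_N=1$, producing a factor $1/(1-s)$. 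I would then isolate this $j=0$ term, namely $u_{0,m}^{\langle N\rangle}w_{0,m}^{\langle N\rangle}/(1-s)$, and let $s\to1^-$: the remaining terms stay bounded, since $\lambda_j^{[N]}<\lambda_0^{[N]}$ for $j\ge1$ forces $1-s\lambda_j^{[N]}/\lambda_0^{[N]}\to1-\lambda_j^{[N]}/\lambda_0^{[N]}>0$. Hence the whole sum diverges provided the residue $u_{0,m}^{\langle N\rangle}w_{0,m}^{\langle N\rangle}$ is strictly positive, and Remark~\ref{condicionsuficienteparalarecurrencia} then gives recurrence of state $m$; as $m$ is arbitrary, the chain is recurrent.

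The main obstacle is exactly this positivity of the Perron residue $u_{0,m}^{\langle N\rangle}w_{0,m}^{\langle N\rangle}$. I would settle it by Perron--Frobenius: $\hat T^{[N]}$ is nonnegative and irreducible (one class, already established), so its Perron eigenvalue $\lambda_0^{[N]}$ is simple and both the right and left Perron eigenvectors $u_0^{\langle N\rangle}$, $w_0^{\langle N\rangle}$ may be chosen entrywise strictly positive; in particular $u_{0,m}^{\langle N\rangle}w_{0,m}^{\langle N\rangle}>0$ for every $m$. Equivalently, this product is the stationary mass $\pi_m$ arising as the $k\to\infty$ limit of $(\hat T^{[N]})^k_{n,m}$ in the third form of the spectral representation, which is positive for an irreducible finite chain.

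An alternative, shorter route bypasses generating functions altogether: from the spectral representation, $(\hat T^{[N]})^k_{m,m}\to u_{0,m}^{\langle N\rangle}w_{0,m}^{\langle N\rangle}>0$ as $k\to\infty$ because every factor $(\lambda_j^{[N]}/\lambda_0^{[N]})^k$ with $j\ge1$ decays; consequently $\sum_k(\hat T^{[N]})^k_{m,m}=+\infty$, and recurrence follows at once from Remark~\ref{condicionsuficienteparalarecurrencia}. Either way, the entire content of the argument is that the mass at the Perron eigenvalue (the node $x_N=1$) contributes a simple pole at $s=1$, and this is where I expect the only genuine work—guaranteeing strict positivity of that mass—to lie.
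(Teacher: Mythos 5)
Your proposal is correct and follows essentially the same route as the paper: both decompose the diagonal Karlin--McGregor sum to isolate the Perron term $u_{0,m}^{\langle N\rangle}w_{0,m}^{\langle N\rangle}/(1-s)$, use strict positivity of the Perron eigenvector entries to get a simple pole at $s=1$, and conclude recurrence via the equivalence between $P_{m,m}(s)\to+\infty$ and $F_{m,m}(s)\to 1$. Your explicit Perron--Frobenius justification of the residue's positivity and the alternative summability argument are welcome elaborations but do not change the substance of the argument.
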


\begin{proof}
	Observe that
	\[
	\sum_{a=1}^p\sum_{b=1}^{q}	\int B^{(b)}_m(x)\frac{\d\psi^{[N]}_{b,a}}{1-sx_N}(x)A^{(a)}_{m}(x)
	=
	\frac{1}{1-s}u^{{\langle N\rangle}}_{0,m}w^{{\langle N\rangle}}_{0,m}
	+\sum_{j=1}^{N}\frac{1}{1-s\frac{\lambda^{[N]}_j}{\lambda^{[N]}_0}}	u^{{\langle N\rangle}}_{j,m}w^{{\langle N\rangle}}_{j,m}
	\]
	in terms of the entries of the right and left eigenvectors, $u^{\langle N\rangle}_{k,m},w^{\langle N\rangle}_{k,m}$, respectively. As all the entries of the Perron left and right eigenvectors, $u^{\langle N\rangle}_{0,m},w^{\langle N\rangle}_{0,m}$, are positive we conclude that
	\[
	\lim\limits_{s\to 1^-}F_{mm}^{[N]}(s)=1,
	\]
	and all the states are recurrent.
\end{proof}

\begin{teo}[Ergodic chain]
	The finite Markov chain with transition matrix $\hat T^{[N]}$ is ergodic.
\end{teo}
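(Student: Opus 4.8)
The plan is to read the conclusion straight off Definition \ref{defergodico}, which declares a chain \emph{ergodic} precisely when every one of its states is simultaneously aperiodic and recurrent. Both ingredients have already been secured for $\hat T^{[N]}$ in the preceding results, so the proof reduces to assembling them; I do not expect any genuine obstacle here, since the statement is essentially a corollary of what has come before.

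First I would invoke the aperiodicity Proposition proved above: because $\hat T^{[N]}$ admits a stochastic bidiagonal factorization, all of its diagonal entries $(\hat T^{[N]})_{i,i}$ are strictly positive, so $r=1$ belongs to the set of return times of each state and hence the period equals $1$ (cf.\ Remark \ref{condicionsuficienteaperiodicidad}). Thus every state is aperiodic. Next I would invoke Theorem~(Recurrent chains), which establishes $\lim_{s\to 1^-}F_{m}^{[N]}(s)=1$ for every $m\in\{0,\dots,N\}$ by expanding the diagonal generating function spectrally and using the strict positivity of the Perron left and right eigenvector entries $u^{\langle N\rangle}_{0,m},w^{\langle N\rangle}_{0,m}$. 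Hence every state is recurrent.

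Combining the two, each state $m\in\{0,\dots,N\}$ is at once aperiodic and recurrent, i.e.\ ergodic in the sense of Definition \ref{defergodico}; as this holds uniformly over all $N+1$ states, the chain with transition matrix $\hat T^{[N]}$ is ergodic. The only point demanding attention is that \emph{both} properties must hold for \emph{all} states rather than merely for some, which is exactly what the earlier Proposition and Theorem deliver. As a bonus, the finiteness and irreducibility already established promote recurrence to positive recurrence, so the limit relation \eqref{eq:limit} applies and the unique stationary distribution is approached geometrically, at a rate governed by $\lambda^{[N]}_1/\lambda^{[N]}_0$.
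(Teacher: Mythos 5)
Your proof is correct and, like the paper's, is a one-line assembly of previously established facts; the paper's own proof simply states that ``ergodic chains are irreducible recurrent chains, and both properties have been proven.'' In fact your version is the more faithful one: Definition \ref{defergodico} defines ergodicity as \emph{aperiodicity} plus recurrence for every state, which is exactly the pair you combine (via the aperiodicity proposition, resting on the positive diagonal entries, and the recurrence theorem), whereas the paper's proof invokes irreducibility rather than aperiodicity. Since irreducibility, aperiodicity and recurrence have all been proven for $\hat T^{[N]}$ in the preceding propositions, both arguments go through, but yours matches the stated definition exactly.
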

\begin{proof}
	Ergodic chains are irreducible recurrent chains, and both properties have been proven.
\end{proof}

\begin{teo}[Stationary state]\label{teo:stationary_states_finite}
	The probability distribution \[ \pi^{[N]}=\begin{bNiceMatrix}
		\pi_{0}^{[N]}&\pi_{1}^{[N]}&\Cdots&\pi^{[N]}_{N}
	\end{bNiceMatrix}\coloneq
	w^{\langle N\rangle}_0\delta\left(u^{\langle N\rangle}_0\right), \]
	that has entries
	\begin{align*}
		\pi^{[N]}_m=u^{\langle N\rangle}_{0,m}w^{{\langle N\rangle}}_{0,m}=\sum_{a=1}^p\sum_{b=1}^q
		B_{m}^{(b)}\big(\lambda^{[N]}_0\big) A_{m}^{(a)}\big(\lambda^{[N]}_0\big)\rho^{[N]}_{0,b} \mu^{[N]}_{0,a}=
		\frac{ \alpha_N Q_{m,N}\big(\lambda^{[N]}_0\big)R_{m,N}\big(\lambda^{[N]}_0\big)
		}{
			P_{N}\big(\lambda^{[N]}_0\big)P'_{N+1}\big(\lambda^{[N]}_0\big)}.
	\end{align*}
	is stationary.
\end{teo}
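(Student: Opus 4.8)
The plan is to verify directly that $\pi^{[N]}$ is invariant under $\hat T^{[N]}$ and that it is a genuine probability vector, and only afterwards to reconcile the three expressions for its entries. Set $D=\delta\big(u_0^{\langle N\rangle}\big)$, so that $\pi^{[N]}=w_0^{\langle N\rangle}D$ read as a row vector and, as already observed, $De^{[N]}=u_0^{\langle N\rangle}$. I would recall from the construction of $\hat T^{[N]}$ that $\hat T^{[N]}=\frac{1}{\lambda_0^{[N]}}D^{-1}T^{[N]}D$, and that $w_0^{\langle N\rangle}$ is the left Perron eigenvector of $T^{[N]}$, i.e. $w_0^{\langle N\rangle}T^{[N]}=\lambda_0^{[N]}w_0^{\langle N\rangle}$.

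The invariance is then a one-line computation exploiting the cancellation of the diagonal conjugation:
\[
\pi^{[N]}\hat T^{[N]}=w_0^{\langle N\rangle}D\,\frac{1}{\lambda_0^{[N]}}D^{-1}T^{[N]}D=\frac{1}{\lambda_0^{[N]}}w_0^{\langle N\rangle}T^{[N]}D=w_0^{\langle N\rangle}D=\pi^{[N]},
\]
so $\pi^{[N]}$ is a left eigenvector of $\hat T^{[N]}$ for the eigenvalue $1$, i.e. a balance solution. It then remains to check that it is a probability vector. Nonnegativity is immediate: the entries $u_{0,m}^{\langle N\rangle}$ of the right Perron eigenvector are strictly positive (as already established), and since $T^{[N]}$ is irreducible its sign pattern agrees with that of $\hat T^{[N]}$, whose single class was proved above, Perron--Frobenius forces the left Perron eigenvector $w_0^{\langle N\rangle}$ to have strictly positive entries as well, whence $\pi_m^{[N]}=u_{0,m}^{\langle N\rangle}w_{0,m}^{\langle N\rangle}>0$. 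For the normalization, $\pi^{[N]}e^{[N]}=w_0^{\langle N\rangle}De^{[N]}=w_0^{\langle N\rangle}u_0^{\langle N\rangle}=\sum_{m=0}^N u_{0,m}^{\langle N\rangle}w_{0,m}^{\langle N\rangle}$, which equals $1$ by the biorthonormal scaling of the Perron eigenvectors fixed in \eqref{eq:eigenvectors}; equivalently, one normalizes $\pi^{[N]}$ and invokes uniqueness of the stationary distribution for the ergodic chain $\hat T^{[N]}$.

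The two closed forms for $\pi_m^{[N]}$ are then obtained by substituting the explicit descriptions of the Perron eigenvector components. The middle equality $\pi_m^{[N]}=\sum_{a=1}^p\sum_{b=1}^q B_m^{(b)}\big(\lambda_0^{[N]}\big)A_m^{(a)}\big(\lambda_0^{[N]}\big)\rho_{0,b}^{[N]}\mu_{0,a}^{[N]}$ follows by inserting the expansions of $u_{0,m}^{\langle N\rangle}$ and $w_{0,m}^{\langle N\rangle}$ in terms of the multiple orthogonal polynomials $B_m^{(b)},A_m^{(a)}$ and the Christoffel numbers $\rho_{0,b}^{[N]},\mu_{0,a}^{[N]}$ from \eqref{eq:eigenvectors}, and expanding the product. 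The final equality, rewriting this as the ratio $\alpha_N Q_{m,N}\big(\lambda_0^{[N]}\big)R_{m,N}\big(\lambda_0^{[N]}\big)\big/\big(P_N\big(\lambda_0^{[N]}\big)P'_{N+1}\big(\lambda_0^{[N]}\big)\big)$, is the confluent Christoffel--Darboux / determinantal identity for the eigenvector components recorded in Appendix \ref{appendix:MOP}.

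I expect the genuine obstacle to be precisely this last step. The invariance and the probability-vector check are short and structural, resting only on the Doob-type conjugation and Perron--Frobenius positivity; the real analytic content lies in matching the probabilistic stationary weights $u_{0,m}^{\langle N\rangle}w_{0,m}^{\langle N\rangle}$ with the explicit polynomial ratio, which requires the precise definitions of $Q_{m,N},R_{m,N},P_N,\alpha_N$ and the Christoffel--Darboux-type evaluation of the mixed multiple orthogonal families at the Perron eigenvalue $\lambda_0^{[N]}$.
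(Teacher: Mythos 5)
Your proposal is correct and follows essentially the same route as the paper: the invariance is exactly the paper's one-line diagonal-conjugation cancellation using the left Perron eigenvector property of $w_0^{\langle N\rangle}$, and positivity plus the normalization $\sum_m u_{0,m}^{\langle N\rangle}w_{0,m}^{\langle N\rangle}=1$ are obtained, as in the paper, from Perron--Frobenius and the biorthogonality fixed in \eqref{eq:eigenvectors}. The closed forms for $\pi_m^{[N]}$ are, as you anticipate, just substitutions of the eigenvector-component formulas from Appendix \ref{appendix:MOP}, which the paper likewise treats as already established rather than reproving.
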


\begin{proof}
	The eigenvalue property $\pi^{[N]}\hat T =\pi^{[N]}$ follows from
	\[
	\begin{aligned}
		\pi^{[N]}\hat T &= w^{\langle N\rangle}_0\delta(u^{\langle N\rangle}_0)\frac{1}{\lambda_0^{[N]}}\delta(u_0^{\langle N\rangle})^{-1} T^{[N]} \delta(u_0^{\langle N\rangle})\\&=
		\frac{1}{\lambda_0^{[N]}}	w^{\langle N\rangle}_0T^{[N]} \delta(u_0^{\langle N\rangle})=w^{\langle N\rangle}_0  \delta(u_0^{\langle N\rangle})=\pi^{[N]}.
	\end{aligned}
	\]
	Also it follows that $\pi^{[N]}_n>0$, $n\in\{0,1,\dots,N\}$ and that $\displaystyle \pi^{[N]}e^{[N]} =\sum_{n=0}^{N}\pi^{[N]}_n=\sum_{n=0}^{N} u_{0,n}w_{0,n}=1$.
\end{proof}

\begin{pro}
	The steady state $\pi_m$ is unique, and the expected return times are given by
	\begin{align*}
		(\bar t_m)_j &=\frac{1}{\pi^{[N]}_m}=\frac{1}{	u^{\langle N\rangle}_{0,m}w^{{\langle N\rangle}}_{0,m}}=\frac{1}{\sum_{a=1}^p\sum_{b=1}^q
			B_{m}^{(b)}\big(\lambda^{[N]}_0\big) A_{m}^{(a)}\big(\lambda^{[N]}_0\big)\rho^{[N]}_{0,b} \mu^{[N]}_{0,a}}= 	\frac{
			P_{N}\big(\lambda^{[N]}_0\big)P'_{N+1}\big(\lambda^{[N]}_0\big)}{ \alpha_N Q_{m,N}\big(\lambda^{[N]}_0\big)R_{m,N}\big(\lambda^{[N]}_0\big)
		}.
	\end{align*}
	
\end{pro}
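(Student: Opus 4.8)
The plan is to assemble this proposition entirely from results already in place, since no genuinely new computation is needed—only a careful invocation of the abstract Markov-chain dictionary together with the finite-dimensional spectral data established for $\hat T^{[N]}$. Concretely, I would read off uniqueness from irreducibility, the return-time identity from recurrence, and the closed-form expressions directly from Theorem \ref{teo:stationary_states_finite}, where the entries $\pi_m^{[N]}=u^{\langle N\rangle}_{0,m}w^{\langle N\rangle}_{0,m}$ and their polynomial representations were already derived.

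First I would establish uniqueness. We have shown that the chain with transition matrix $\hat T^{[N]}$ is irreducible (it has a single communicating class), and Theorem \ref{teo:stationary_states_finite} exhibits an explicit invariant probability vector $\pi^{[N]}=w^{\langle N\rangle}_0\,\delta\!\big(u^{\langle N\rangle}_0\big)$ satisfying $\pi^{[N]}\hat T^{[N]}=\pi^{[N]}$. By the general theorem on steady states quoted earlier—\emph{for an irreducible Markov chain, if a steady state exists then it is unique}—these two facts together force uniqueness of $\pi^{[N]}$. Thus the first assertion follows with no extra work.

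Next I would derive the expected return times. Since $\hat T^{[N]}$ was proven to be recurrent (indeed ergodic), the second part of the same abstract theorem applies: for a recurrent chain the unique stationary distribution satisfies $\pi_m=1/\bar t_m$, whence $\bar t_m=1/\pi^{[N]}_m$. Substituting the three equivalent expressions for $\pi^{[N]}_m$ already recorded in Theorem \ref{teo:stationary_states_finite}—namely $u^{\langle N\rangle}_{0,m}w^{\langle N\rangle}_{0,m}$, the bilinear sum in the multiple orthogonal polynomials $B^{(b)}_m,A^{(a)}_m$ evaluated at the Perron eigenvalue $\lambda^{[N]}_0$ against the Christoffel numbers $\rho^{[N]}_{0,b},\mu^{[N]}_{0,a}$, and the ratio involving $P_N,P'_{N+1},Q_{m,N},R_{m,N}$—yields the chain of equalities in the statement by simply taking reciprocals. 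Positivity of every factor, guaranteed by the Perron--Frobenius structure and the positivity of the Christoffel numbers, ensures these reciprocals are well defined and finite.

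Since each ingredient is already available, I do not anticipate a substantive obstacle; the only point requiring care is verifying that the hypotheses of the abstract theorem—irreducibility for uniqueness and recurrence for the identity $\pi_m=1/\bar t_m$—are exactly the properties established for $\hat T^{[N]}$, so that the abstract dictionary transfers verbatim to the spectral formulas of Theorem \ref{teo:stationary_states_finite}.
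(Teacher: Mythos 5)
Your proposal is correct and follows essentially the same route as the paper: the paper's proof likewise invokes the previously established recurrence of the chain with transition matrix $\hat T^{[N]}$ to conclude uniqueness of the steady state and the identity $\bar t_m = 1/\pi^{[N]}_m$, with the explicit formulas then read off from Theorem \ref{teo:stationary_states_finite}. Your version merely spells out the citation of the abstract theorem and the substitution of the equivalent expressions in more detail than the paper does.
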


\begin{proof}
	As was proved, the Markov chain is recurrent, so the steady state is unique, and its entries are reciprocal to these expected return times.
\end{proof}

\begin{pro}
	The stationary state is the limit of the iterated probabilities
	\[
	\pi^{[N]}_m=\lim_{k\to\infty} ((\hat T^{[N]})^k)_{n,m},
	\]
	and the convergence is geometric in terms of the second largest eigenvalue:
	\[
	\begin{aligned}
		\lim_{k\to\infty}\left(((\hat T^{[N]})^k)_{n,m}-\pi^{[N]}_m\right)&\xrightarrow[k\to\infty]{}
		\left(	\frac{\lambda^{[N]}_1}{\lambda^{[N]}_0}\right)^k	u^{{\langle N\rangle}}_{1,m}w^{{\langle N\rangle}}_{1,m}=	\left(	\frac{\lambda^{[N]}_1}{\lambda^{[N]}_0}\right)^k 	\frac{ \alpha_N Q_{m,N}\big(\lambda^{[N]}_1\big)R_{m,N}\big(\lambda^{[N]}_1\big)
		}{
			P_{N}\big(\lambda^{[N]}_1\big)P'_{N+1}\big(\lambda^{[N]}_1\big)}.
	\end{aligned}
	\]
\end{pro}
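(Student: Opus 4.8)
The plan is to derive both statements directly from the spectral representation established above for $(\hat T^{[N]})^k$, together with the Perron--Frobenius and oscillatory structure of the truncation. Because $T^{[N]}$ is oscillatory, its eigenvalues are simple, positive and strictly ordered, $\lambda_0^{[N]}>\lambda_1^{[N]}>\dots>\lambda_N^{[N]}>0$; hence each ratio $\lambda_j^{[N]}/\lambda_0^{[N]}$ with $j\geq1$ lies in $(0,1)$ and $\lambda_1^{[N]}/\lambda_0^{[N]}$ is the largest among them. This spectral gap is the only input required beyond the representation formula itself.

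First I would prove convergence to the stationary state. Taking the third form of the spectral representation,
\[
((\hat T^{[N]})^k)_{n,m}=u_{0,m}^{\langle N\rangle}w_{0,m}^{\langle N\rangle}+\frac{u_{0,m}^{\langle N\rangle}}{u_{0,n}^{\langle N\rangle}}\sum_{j=1}^N u_{j,n}^{\langle N\rangle}\left(\frac{\lambda_j^{[N]}}{\lambda_0^{[N]}}\right)^k w_{j,m}^{\langle N\rangle},
\]
I identify the $j=0$ contribution with $u_{0,m}^{\langle N\rangle}w_{0,m}^{\langle N\rangle}=\pi_m^{[N]}$ by Theorem~\ref{teo:stationary_states_finite}, while every summand with $j\geq1$ carries a factor $(\lambda_j^{[N]}/\lambda_0^{[N]})^k\to0$. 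As the sum is finite, letting $k\to\infty$ gives $\lim_{k\to\infty}((\hat T^{[N]})^k)_{n,m}=\pi_m^{[N]}$ for each $n$, which is the first assertion.

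For the geometric rate I would factor the dominant ratio out of the remainder. Writing the difference as the tail above and extracting $(\lambda_1^{[N]}/\lambda_0^{[N]})^k$,
\[
((\hat T^{[N]})^k)_{n,m}-\pi_m^{[N]}=\left(\frac{\lambda_1^{[N]}}{\lambda_0^{[N]}}\right)^k\frac{u_{0,m}^{\langle N\rangle}}{u_{0,n}^{\langle N\rangle}}\left[u_{1,n}^{\langle N\rangle}w_{1,m}^{\langle N\rangle}+\sum_{j=2}^N u_{j,n}^{\langle N\rangle}\left(\frac{\lambda_j^{[N]}}{\lambda_1^{[N]}}\right)^k w_{j,m}^{\langle N\rangle}\right].
\]
Since $\lambda_j^{[N]}/\lambda_1^{[N]}<1$ for $j\geq2$, the bracketed expression converges to its $j=1$ term, so the deviation decays like $(\lambda_1^{[N]}/\lambda_0^{[N]})^k$ times the spectral weight attached to the second eigenvalue, which is precisely the advertised geometric rate. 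The closed form on the right-hand side then follows by inserting the explicit eigenvector entries $u_{1,m}^{\langle N\rangle},w_{1,m}^{\langle N\rangle}$ from Appendix~\ref{appendix:MOP}: these are the identities that produced the stationary formula of Theorem~\ref{teo:stationary_states_finite} at $\lambda_0^{[N]}$, now evaluated at $\lambda_1^{[N]}$, giving $\alpha_N Q_{m,N}\big(\lambda_1^{[N]}\big)R_{m,N}\big(\lambda_1^{[N]}\big)/\big(P_N(\lambda_1^{[N]})P'_{N+1}(\lambda_1^{[N]})\big)$.

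No deep obstacle arises; the result is essentially a corollary of the spectral representation and the strict spectral gap $\lambda_0^{[N]}>\lambda_1^{[N]}$. The one point demanding care is the bookkeeping of the eigenvector normalizations: unlike the $j=0$ term, where the factor $u_{0,n}^{\langle N\rangle}$ from the eigenvector cancels the $u_{0,n}^{\langle N\rangle}$ in the prefactor and leaves the $n$-independent stationary weight $u_{0,m}^{\langle N\rangle}w_{0,m}^{\langle N\rangle}$, the $j=1$ coefficient retains the combination $u_{0,m}^{\langle N\rangle}u_{1,n}^{\langle N\rangle}/u_{0,n}^{\langle N\rangle}$, so one must track how, under the conjugation $\hat T^{[N]}=(\lambda_0^{[N]})^{-1}\delta(u_0^{\langle N\rangle})^{-1}T^{[N]}\delta(u_0^{\langle N\rangle})$ relating the eigenvectors of $T^{[N]}$ and of $\hat T^{[N]}$, this combination assembles into the stated leading coefficient before the appendix identities are applied.
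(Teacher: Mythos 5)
Your argument follows the same route as the paper: insert the spectral representation $((\hat T^{[N]})^k)_{n,m}=u_{0,m}^{\langle N\rangle}w_{0,m}^{\langle N\rangle}+\frac{u_{0,m}^{\langle N\rangle}}{u_{0,n}^{\langle N\rangle}}\sum_{j=1}^N u_{j,n}^{\langle N\rangle}\big(\lambda_j^{[N]}/\lambda_0^{[N]}\big)^k w_{j,m}^{\langle N\rangle}$, identify the $j=0$ term with $\pi_m^{[N]}$, and use the strict ordering of the (simple, positive) eigenvalues of the oscillatory truncation to isolate the $j=1$ term as the dominant part of the remainder. That is exactly the paper's proof, and your version is in fact the more careful one. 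However, the issue you flag at the end as a matter of "bookkeeping" is not one that can be resolved: the leading coefficient you correctly derive is $\frac{u_{0,m}^{\langle N\rangle}}{u_{0,n}^{\langle N\rangle}}\,u_{1,n}^{\langle N\rangle}w_{1,m}^{\langle N\rangle}$, which depends on $n$ and does \emph{not} equal the advertised $u_{1,m}^{\langle N\rangle}w_{1,m}^{\langle N\rangle}$ unless $n=m$ (the ratio $u_{1,n}^{\langle N\rangle}/u_{0,n}^{\langle N\rangle}$ is not constant in $n$; indeed $u_1^{\langle N\rangle}$ has a sign change while $u_0^{\langle N\rangle}$ is positive). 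The paper's own proof writes the remainder as $\sum_{j\geq1}(\lambda_j^{[N]}/\lambda_0^{[N]})^k u_{j,m}^{\langle N\rangle}w_{j,m}^{\langle N\rangle}$, silently replacing $u_{j,n}^{\langle N\rangle}$ by $u_{j,m}^{\langle N\rangle}$ and dropping the prefactor, i.e.\ it tacitly specializes to the diagonal $n=m$. So rather than deferring the reconciliation, you should either restrict the second display to $n=m$ or keep your $n$-dependent coefficient; the closed determinantal form then follows, as you say, by evaluating the appendix identities for $u_{1}^{\langle N\rangle},w_{1}^{\langle N\rangle}$ at $\lambda_1^{[N]}$.
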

\begin{proof}
	Note that
	\[
	\begin{aligned}
		((\hat T^{[N]})^k)_{n,m}-\pi^{[N]}_m&=
		\sum_{j=1}^{N}
		\left(	\frac{\lambda^{[N]}_j}{\lambda^{[N]}_0}\right)^k	u^{{\langle N\rangle}}_{j,m}w^{{\langle N\rangle}}_{j,m}.
	\end{aligned}
	\]
	Then, as eigenvalues are simple and ordered, the second dominant term is related to the second largest eigenvalue.
\end{proof}

\begin{rem}
	Then, this steady state is the equilibrium state to which the evolution of any probability vector tends as time goes to infinity. This follows also from ergodicity.
\end{rem}

\begin{pro}
	The transition matrix of the time reversal Markov chain of the Markov chain with transition matrix $\hat T^{[N]}$ is
	\begin{align*}
		\tilde T^{[N]}= \frac{1}{\lambda^{[N]}_0}\delta(w_0^{\langle N\rangle})^{-1}(T^{[N]})^\top \delta(w_0^{\langle N\rangle})
	\end{align*}
	with entries
	\[
	(\tilde T^{[N]})_{i,j}= \frac{1}{\lambda^{[N]}_0}
	\frac{w^{{\langle N\rangle}}_{0,j}}{w^{{\langle N\rangle}}_{0,i}}(T^{[N]})_{j,i}.
	\]
\end{pro}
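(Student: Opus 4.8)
The plan is to invoke the general time-reversal formula recorded in the Time reversal remark and then substitute the explicit spectral data for the chain $\hat T^{[N]}$. Recall that for an irreducible chain with transition matrix $P$ and strictly positive stationary distribution $\pi$, the reversed chain has transition matrix $Q$ with entries $Q_{i,j}=\frac{\pi_j}{\pi_i}P_{j,i}$. Here I would take $P=\hat T^{[N]}$ and $\pi=\pi^{[N]}$; the latter is strictly positive and stationary by Theorem~\ref{teo:stationary_states_finite}, with entries $\pi^{[N]}_m=u^{\langle N\rangle}_{0,m}w^{\langle N\rangle}_{0,m}$. Thus I would begin from
\[
(\tilde T^{[N]})_{i,j}=\frac{\pi^{[N]}_j}{\pi^{[N]}_i}(\hat T^{[N]})_{j,i}
=\frac{u^{\langle N\rangle}_{0,j}w^{\langle N\rangle}_{0,j}}{u^{\langle N\rangle}_{0,i}w^{\langle N\rangle}_{0,i}}(\hat T^{[N]})_{j,i}.
\]

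The second step is to expand $(\hat T^{[N]})_{j,i}$ using its definition $\hat T^{[N]}=\frac{1}{\lambda_0^{[N]}}\delta(u_0^{\langle N\rangle})^{-1}T^{[N]}\delta(u_0^{\langle N\rangle})$, so that entrywise $(\hat T^{[N]})_{j,i}=\frac{1}{\lambda_0^{[N]}}\frac{u^{\langle N\rangle}_{0,i}}{u^{\langle N\rangle}_{0,j}}(T^{[N]})_{j,i}$. Substituting and cancelling the $u_0^{\langle N\rangle}$ factors leaves precisely
\[
(\tilde T^{[N]})_{i,j}=\frac{1}{\lambda_0^{[N]}}\frac{w^{\langle N\rangle}_{0,j}}{w^{\langle N\rangle}_{0,i}}(T^{[N]})_{j,i},
\]
which is the claimed entry formula. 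Reading $(T^{[N]})_{j,i}=((T^{[N]})^\top)_{i,j}$ and absorbing the scalar ratios into a diagonal conjugation gives the matrix identity $\tilde T^{[N]}=\frac{1}{\lambda_0^{[N]}}\delta(w_0^{\langle N\rangle})^{-1}(T^{[N]})^\top\delta(w_0^{\langle N\rangle})$.

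Finally, I would confirm that $\tilde T^{[N]}$ is genuinely stochastic, which is the only point requiring an input beyond bookkeeping. Summing row $i$,
\[
\sum_{j=0}^N(\tilde T^{[N]})_{i,j}=\frac{1}{\lambda_0^{[N]}w^{\langle N\rangle}_{0,i}}\sum_{j=0}^N w^{\langle N\rangle}_{0,j}(T^{[N]})_{j,i}=\frac{1}{\lambda_0^{[N]}w^{\langle N\rangle}_{0,i}}\big(w_0^{\langle N\rangle}T^{[N]}\big)_i,
\]
and the left-eigenvector relation $w_0^{\langle N\rangle}T^{[N]}=\lambda_0^{[N]}w_0^{\langle N\rangle}$ (already used in the proof of Theorem~\ref{teo:stationary_states_finite}) reduces this to $1$; nonnegativity of the entries is clear since both $T^{[N]}$ and the Perron eigenvector $w_0^{\langle N\rangle}$ are positive. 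There is no substantive obstacle here: the argument is a direct substitution, and the only care needed is the consistent use of the left/right eigenvector conventions and the index transposition. The stochasticity check serves mainly as a consistency verification that the conjugated transpose is again a transition matrix.
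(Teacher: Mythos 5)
Your proposal is correct and follows essentially the same route as the paper's proof: apply the general time-reversal formula $(\tilde T^{[N]})_{i,j}=\frac{\pi^{[N]}_j}{\pi^{[N]}_i}(\hat T^{[N]})_{j,i}$ with the stationary distribution $\pi^{[N]}_m=u^{\langle N\rangle}_{0,m}w^{\langle N\rangle}_{0,m}$, expand $(\hat T^{[N]})_{j,i}$ via the diagonal conjugation by $\delta(u_0^{\langle N\rangle})$, cancel the $u$-factors, and verify stochasticity from the left Perron eigenvector relation $w_0^{\langle N\rangle}T^{[N]}=\lambda_0^{[N]}w_0^{\langle N\rangle}$. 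No discrepancies to note.
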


\begin{proof}
	As the steady state satisfies $\pi^{[N]}_i>0$ for $i\in\{0,1,\dots,N\}$, the time reversal Markov chain has transition matrix entries
	\begin{align*}
		(\tilde T^{[N]})_{i,j}&=\frac{\pi^{[N]}_j}{\pi^{[N]}_i}(\hat T^{[N]})_{j,i}=\frac{u^{\langle N\rangle}_{0,j}w^{{\langle N\rangle}}_{0,j}}{u^{\langle N\rangle}_{0,i}w^{{\langle N\rangle}}_{0,i}}(\hat T^{[N]})_{j,i}= \frac{1}{\lambda^{[N]}_0}
		\frac{u^{\langle N\rangle}_{0,j}w^{{\langle N\rangle}}_{0,j}}{u^{\langle N\rangle}_{0,i}w^{{\langle N\rangle}}_{0,i}}\frac{u^{\langle N\rangle}_{0,i}}{u^{\langle N\rangle}_{0,j}}(T^{[N]})_{j,i}\\
		&= \frac{1}{\lambda^{[N]}_0}
		\frac{w^{{\langle N\rangle}}_{0,j}}{w^{{\langle N\rangle}}_{0,i}}(T^{[N]})_{j,i}.
	\end{align*}
	Note that the matrix $\tilde T^{[N]}$ is stochastic, i.e.
	\[
	\begin{aligned}
		\tilde T^{[N]}e^{[N]}&= \frac{1}{\lambda^{[N]}_0}\delta(w_0^{\langle N\rangle})^{-1}(T^{[N]})^\top \delta(w_0^{\langle N\rangle})e^{[N]}
		=\delta(w_0^{\langle N\rangle})^{-1}(w_0^{\langle N\rangle})^\top=e^{[N]}.
	\end{aligned}
	\]
\end{proof}

\begin{pro}[Detailed balance]
	The following detailed balance equation is satisfied:
	\begin{align}\label{eq:detalied_balance}
		\pi^{[N]}_i (\hat T^{[N]})_{i,j} &= \pi^{[N]}_j (\tilde T^{[N]})_{j,i},
	\end{align}
	for $i,j\in\{0,1,\dots,N\}$.
\end{pro}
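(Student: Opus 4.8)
The plan is to read the identity straight off the definition of the time-reversal matrix established in the immediately preceding proposition, so that the proof collapses to a one-line index manipulation rather than any substantive computation. Recall that $\tilde T^{[N]}$ was introduced through the relation
\[
(\tilde T^{[N]})_{i,j}=\frac{\pi^{[N]}_j}{\pi^{[N]}_i}(\hat T^{[N]})_{j,i},
\]
which is meaningful precisely because every stationary weight $\pi^{[N]}_m=u^{\langle N\rangle}_{0,m}w^{\langle N\rangle}_{0,m}$ is strictly positive, a fact already secured from the positivity of the Perron left and right eigenvectors in Theorem~\ref{teo:stationary_states_finite}.

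First I would swap the roles of the indices $i$ and $j$ in that defining relation to obtain
\[
(\tilde T^{[N]})_{j,i}=\frac{\pi^{[N]}_i}{\pi^{[N]}_j}(\hat T^{[N]})_{i,j}.
\]
Multiplying both sides by $\pi^{[N]}_j$ cancels the denominator and yields
\[
\pi^{[N]}_j(\tilde T^{[N]})_{j,i}=\pi^{[N]}_i(\hat T^{[N]})_{i,j},
\]
which is exactly \eqref{eq:detalied_balance}. In effect, the content of the statement was already packed into the construction of $\tilde T^{[N]}$, and detailed balance is merely its transcription; the only hypothesis actually used is the strict positivity of $\pi^{[N]}$, which is what makes the defining conjugation invertible.

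As a cross-check, and an alternative route that bypasses the defining relation altogether, I would substitute the explicit entrywise formulas $(\hat T^{[N]})_{i,j}=\tfrac{1}{\lambda_0^{[N]}}\tfrac{u^{\langle N\rangle}_{0,j}}{u^{\langle N\rangle}_{0,i}}(T^{[N]})_{i,j}$ and $(\tilde T^{[N]})_{j,i}=\tfrac{1}{\lambda_0^{[N]}}\tfrac{w^{\langle N\rangle}_{0,i}}{w^{\langle N\rangle}_{0,j}}(T^{[N]})_{i,j}$ together with $\pi^{[N]}_m=u^{\langle N\rangle}_{0,m}w^{\langle N\rangle}_{0,m}$, and observe that both sides reduce to the common expression $\tfrac{1}{\lambda_0^{[N]}}u^{\langle N\rangle}_{0,j}w^{\langle N\rangle}_{0,i}(T^{[N]})_{i,j}$. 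There is no genuine obstacle here: the work was done in establishing the stationary distribution and the time-reversal matrix, so detailed balance is essentially automatic once those objects are in hand.
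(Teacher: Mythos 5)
Your proof is correct and matches the paper's (implicit) reasoning: the paper states this proposition without a separate proof precisely because, as you observe, detailed balance is an immediate rearrangement of the defining relation $(\tilde T^{[N]})_{i,j}=\frac{\pi^{[N]}_j}{\pi^{[N]}_i}(\hat T^{[N]})_{j,i}$ from the preceding proposition, with positivity of $\pi^{[N]}$ guaranteeing the manipulation is legitimate. Your entrywise cross-check, reducing both sides to $\tfrac{1}{\lambda_0^{[N]}}u^{\langle N\rangle}_{0,j}w^{\langle N\rangle}_{0,i}(T^{[N]})_{i,j}$, is also accurate.
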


\subsection{Countable infinite Markov chains}

We now turn to the Markov chain that has a bounded banded transition matrix with a stochastic bidiagonal factorization (or, equivalently, a stochastic matrix with a positive bidiagonal factorization).

Being $T$ stochastic, we have $\|T\|_\infty =1$. Since its truncations $T^{[N]}$ are substochastic, their eigenvalues satisfy $\{\lambda_k^{[N]}\}_{k=0}^N\subset (0,1)$. Recall that the eigenvalues $\{\lambda_k^{[N]}\}_{k=0}^N$ are interlaced by $\{\lambda_k^{[N+1]}\}_{k=0}^{N+1}$. Hence, the support of the limit measure belongs to $[0,1]$.
From the inductive limit $T^{[N]}\to T$ we conjecture that $\lambda_0^{[N]}\xrightarrow[N\to\infty]{}1$.

\begin{teo}[Karlin--McGregor spectral representation]
	We have the following spectral representations for several probability quantities of the Markov chain:
	\begin{enumerate}
		\item 	The spectral representation for the iterated probabilities is given by
		\[
		(T^k)_{n,m}=
		\sum_{a=1}^p\sum_{b=1}^{q}	\int_0^1B^{(b)}_n(x)x^k\d\psi_{b,a}(x)A^{(a)}_{m}(x)
		\]
		for $n,m\in\N_0$.
		\item The corresponding generating functions are
		\[
		(T(s))_{n,m}=
		\sum_{a=1}^p\sum_{b=1}^{q}	\int_0^1B^{(b)}_n(x)\frac{\d\psi_{b,a}(x)}{1-sx}A^{(a)}_{m}(x)
		\]
		\item The first passage generating functions, for $n\neq m$, are
		\[
		(F(s))_{n,m}=
		\frac{	\sum_{a=1}^p\sum_{b=1}^{q}	\int_0^1B^{(b)}_n(x)\frac{\d\psi_{b,a}(x)}{1-sx}A^{(a)}_{m}(x)}{
			\sum_{a=1}^p\sum_{b=1}^{q}	\int_0^1B^{(b)}_m(x)\frac{\d\psi_{b,a}(x)}{1-sx}A^{(a)}_{m}(x)}
		\]
		\item The first passage generating functions, for $n= m$, are
		\[
		(F(s))_{m}=1-\frac{1}{\sum_{a=1}^p\sum_{b=1}^{q}	\int_0^1B^{(b)}_m(x)\frac{\d\psi_{b,a}(x)}{1-sx}A^{(a)}_{m}(x)}.
		\]
	\end{enumerate}
\end{teo}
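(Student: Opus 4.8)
The plan is to obtain part~(1) directly from the spectral Favard theorem for bounded banded matrices admitting a positive bidiagonal factorization \cite{BFM1,BFM4}, and then to deduce the remaining three parts by elementary manipulations. Since $T$ is stochastic we have $\|T\|_\infty=1$, so $T$ is a bounded banded operator carrying a positive (indeed stochastic) bidiagonal factorization. The Favard machinery therefore supplies the mixed multiple orthogonal polynomials $A^{(a)}_m(x)$, $B^{(b)}_n(x)$ and the matrix of spectral measures $\d\psi_{b,a}$, for which the recursion polynomials are biorthogonal and complete; this yields the resolution of the identity and, applied to $f(x)=x^k$, the functional-calculus identity $(T^k)_{n,m}=\sum_{a=1}^p\sum_{b=1}^q\int_0^1 B^{(b)}_n(x)\,x^k\,\d\psi_{b,a}(x)\,A^{(a)}_m(x)$. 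I would additionally identify this measure as the weak limit of the finite spectral data $\d\psi^{[N]}_{b,a}$: the interlacing of the $\lambda^{[N]}_k$ together with $\|T\|_\infty=1$ confines every support to $[0,1]$, and since $\lambda^{[N]}_0\to 1$ (so $x_N\to x$) the finite representation already proved passes to the limit, confirming that $\d\psi_{b,a}$ is supported in $[0,1]$.

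For part~(2) I would start from the series $(T(s))_{n,m}=\sum_{k\ge 0}(T^k)_{n,m}\,s^k$ and substitute the representation from part~(1). Because the measures are finite and supported in $[0,1]$ while $|s|<1$, one has $|sx|\le|s|<1$ for every $x$ in the support, so $\sum_{k\ge 0}(sx)^k=\frac{1}{1-sx}$ converges uniformly there; uniform convergence on the compact support against a finite measure licenses the interchange of summation and integration, producing $(T(s))_{n,m}=\sum_{a=1}^p\sum_{b=1}^q\int_0^1 B^{(b)}_n(x)\,\frac{\d\psi_{b,a}(x)}{1-sx}\,A^{(a)}_m(x)$.

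Parts~(3) and~(4) follow from the standard first-passage decomposition. Conditioning on the first visit to $m$ gives, for any Markov chain, $P_{n,m}(s)=F_{n,m}(s)\,P_{m,m}(s)$ when $n\neq m$, whence $F_{n,m}(s)=P_{n,m}(s)/P_{m,m}(s)$; and $F_{m,m}(s)=1-1/P_{m,m}(s)$ was already recorded in the Introduction. Identifying $P_{n,m}(s)=(T(s))_{n,m}$ and inserting the formula from part~(2)—noting that the denominator is exactly the diagonal generating function $(T(s))_{m,m}$—delivers the two quotient expressions stated.

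The main obstacle I anticipate lies in the infinite-dimensional functional calculus underpinning part~(1): one must verify that the biorthogonality and completeness of the recursion polynomials, valid for the oscillatory truncations $T^{[N]}$, survive the passage to the limit, so that the limiting matrix of measures genuinely reproduces the entries of $T^k$ and is supported in $[0,1]$. This is precisely where the interlacing of the $\lambda^{[N]}_k$, the uniform bound $\|T\|_\infty=1$, and the convergence $\lambda^{[N]}_0\to 1$ are needed to guarantee tightness and weak convergence of $\d\psi^{[N]}_{b,a}$ to $\d\psi_{b,a}$; once this is secured, the remaining steps are routine.
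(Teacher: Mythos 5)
Your proposal is correct and follows essentially the same route as the paper: part (1) is obtained by invoking the spectral Favard theorem for bounded banded matrices with a positive bidiagonal factorization (the paper's proof is exactly this one-line citation of \cite{BFM1}), and parts (2)--(4) follow from the geometric series $\sum_{k\ge 0}(sx)^k=\frac{1}{1-sx}$ together with the standard first-passage identities $F_{n,m}(s)=P_{n,m}(s)/P_{m,m}(s)$ and $F_{m,m}(s)=1-1/P_{m,m}(s)$, just as in the finite case treated earlier in the paper. Your extra care about the weak limit of the discrete measures and the support in $[0,1]$ is a sensible elaboration of what the paper leaves implicit (note only that the paper merely \emph{conjectures} $\lambda_0^{[N]}\to 1$, which is not actually needed here since the support confinement already follows from $\{\lambda_k^{[N]}\}\subset(0,1)$ and interlacing).
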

\begin{proof}
	It is a direct consequence of the Favard spectral theorem described in the introduction, see \cite{BFM1}. That is, if $T$ has a PBF and an appropriate choice of the initial conditions for the recurrence polynomials is made, then there exists a matrix of measures such that the recurrence polynomials are the mixed-type multiple orthogonal polynomials with respect to this matrix of measures.
\end{proof}

As in the finite case we can prove that
\begin{pro}
	The Markov chain is irreducible, i.e. all states communicate.
\end{pro}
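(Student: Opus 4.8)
The plan is to follow exactly the strategy used in the finite case, exploiting that the positive bidiagonal factorization forces all nearest-neighbour transitions to have positive probability. First I would record the precise positivity that the PBF yields. Writing $T=L_1\cdots L_p\,\Delta\,U_q\cdots U_1$ in normalized form, the upper factor $U_q\cdots U_1$ is upper triangular with a strictly positive first superdiagonal (it is built from the positive superdiagonals of the $U_j$), while $L_1\cdots L_p\,\Delta$ is lower triangular with strictly positive diagonal and strictly positive first subdiagonal. Multiplying and isolating the diagonal contribution to the $(n,n+1)$ and $(n,n-1)$ entries shows $T_{n,n+1}>0$ and $T_{n,n-1}>0$ for every $n\in\N_0$; more generally every entry inside the band $n-p\le m\le n+q$ is strictly positive, exactly as used in the finite setting.

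With these positive nearest-neighbour probabilities in hand, communication of states is immediate. For $j>i$ the chain can climb one level at a time, $i\to i+1\to\cdots\to j$, each step having positive probability since $T_{k,k+1}>0$; hence $(T^{\,j-i})_{i,j}>0$ and $i\to j$. Symmetrically, for $j<i$ the chain descends along the first subdiagonal, $i\to i-1\to\cdots\to j$, so $i\to j$ as well. Combining the two directions, any ordered pair of states is mutually accessible, which is precisely irreducibility. This is the constructive counterpart of the ``successive powers enlarge the band'' argument used for the truncations.

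An alternative route, which reuses the finite results verbatim, is to reduce to the truncations. Given $i,j$, choose $N\ge\max(i,j)$; the finite chain $\hat T^{[N]}$ was already shown to be irreducible, and since $\hat T^{[N]}$ and $T^{[N]}$ share the same zero/nonzero pattern (they differ by a positive diagonal similarity and a positive scaling), there is $n$ with $((T^{[N]})^n)_{i,j}>0$ along a path confined to $\{0,\dots,N\}$. Every such confined path is also a path for the full chain and all entries of $T$ are nonnegative, so the domination $((T^{[N]})^n)_{i,j}\le (T^n)_{i,j}$ holds; hence $(T^n)_{i,j}>0$ and $i\to j$. As $i,j$ are arbitrary, the infinite chain is irreducible.

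I do not expect a genuine obstacle here: boundedness of $T$ guarantees that the powers $T^n$ are well defined and that the Chapman--Kolmogorov expansion into paths is valid, so the path-domination inequality supplies the passage from the truncations to the full operator. The only point requiring a line of care is the verification that the first sub- and superdiagonals of $T$ are strictly positive, which rests on $p,q\ge 1$ together with the positivity of the bidiagonal factors; everything else is identical to the finite argument.
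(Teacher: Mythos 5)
Your proof is correct and follows essentially the same route as the paper, which for the infinite case simply defers to the finite-case argument that all entries within the band are positive (guaranteed by the PBF) and that successive powers enlarge the band until any two states are connected; your explicit climb/descend along the strictly positive first super- and subdiagonals is just a more detailed rendering of that same idea. The alternative reduction to the truncations via path domination is also valid but unnecessary.
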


\begin{teo}
	The Markov chain is recurrent if and only if the integral
	\begin{align*}
		\int_0^{1}\frac{\d\psi_{1,1}(x)}{1-x}
	\end{align*}
	diverges. Otherwise it is transient.
\end{teo}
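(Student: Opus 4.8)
The plan is to reduce recurrence of the whole chain to the boundary behaviour of a single diagonal generating function, and to evaluate that limit directly from the Karlin--McGregor representation. Since the preceding proposition shows the chain is irreducible, the Class Properties theorem guarantees that all states are simultaneously recurrent or transient; it therefore suffices to decide recurrence for the single state $m=0$. By Remark~\ref{condicionsuficienteparalarecurrencia} that state is recurrent if and only if $\lim_{s\to1^-}P_{0,0}(s)=+\infty$, and the generating function $P_{0,0}(s)$ is exactly the diagonal entry $(T(s))_{0,0}$ supplied by the generating-function part of the Karlin--McGregor theorem.

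First I would specialize the generating-function representation to $n=m=0$, writing
\[
(T(s))_{0,0}=\sum_{a=1}^p\sum_{b=1}^{q}\int_0^1 B^{(b)}_0(x)\,\frac{\d\psi_{b,a}(x)}{1-sx}\,A^{(a)}_{0}(x).
\]
The crucial step is then to insert the initial conditions of the recurrence polynomials recorded in Appendix~\ref{appendix:MOP}: at the starting index these polynomials are constants, normalized so that $B^{(b)}_0\equiv\delta_{b,1}$ and $A^{(a)}_0\equiv\delta_{a,1}$. With this normalization every term with $(b,a)\neq(1,1)$ vanishes and the double sum collapses to
\[
(T(s))_{0,0}=\int_0^1\frac{\d\psi_{1,1}(x)}{1-sx}.
\]
For each fixed $s\in(0,1)$ this integral is finite, since the integrand is bounded on $[0,1]$ and $\psi_{1,1}$ is a finite positive measure, so the criterion of Remark~\ref{condicionsuficienteparalarecurrencia} is genuinely applicable.

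Finally I would pass to the limit $s\to1^-$. For every fixed $x\in[0,1)$ the factor $(1-sx)^{-1}$ increases monotonically to $(1-x)^{-1}$, and at $x=1$ it increases to $+\infty$; since $\d\psi_{1,1}\ge 0$, the monotone convergence theorem yields
\[
\lim_{s\to1^-}(T(s))_{0,0}=\int_0^1\frac{\d\psi_{1,1}(x)}{1-x},
\]
understood as an extended-real-valued integral. Combining this with Remark~\ref{condicionsuficienteparalarecurrencia} gives recurrence exactly when $\int_0^1\d\psi_{1,1}(x)/(1-x)=+\infty$ and transience otherwise, which is the claim. Note that an atom of $\psi_{1,1}$ at $x=1$ automatically forces divergence, matching the ergodic, mass-at-$1$ scenario announced in the introduction.

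I expect the main obstacle to be justifying the collapse of the double sum to the single $(1,1)$ component, which rests entirely on the precise normalization of the initial recurrence polynomials from Appendix~\ref{appendix:MOP}. If the chosen initial data are not literally Kronecker deltas, then one must instead argue that $B^{(b)}_0$ and $A^{(a)}_0$ are constants and that the only potentially divergent contribution near $x=1$ comes from the diagonal mass $\d\psi_{1,1}$, the remaining measures yielding convergent integrals; this is where the positivity of the diagonal spectral measures and the structure of the matrix of measures near the spectral edge $x=1$ must be used carefully.
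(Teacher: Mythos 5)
Your proposal is correct and follows essentially the same route as the paper: reduce to state $0$ by irreducibility, collapse the double sum to the $(1,1)$ entry using the initial conditions $A^{(a)}_0=\delta_{a,1}$, $B^{(b)}_0=\delta_{b,1}$ (which the appendix does fix as literal Kronecker deltas, so the worry at the end of your proposal is moot), and read off recurrence from the boundary behaviour of $\int_0^1 \d\psi_{1,1}(x)/(1-sx)$. The only cosmetic difference is that you phrase the criterion via $P_{0,0}(s)\to+\infty$ and justify the limit interchange by monotone convergence, whereas the paper works directly with $(F(s))_0\to 1$ and leaves that last limit implicit.
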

\begin{proof}
	As the Markov chain is irreducible we only need to prove this fact for the state $0$, and then it propagates to all the other states. Thus we need to check whether $\lim_{s\to1^-} (F(s))_0=1$ or not. However, for the first passage generating function for the $0$ state
	\[
	(F(s))_{0}=1-\frac{1}{\sum_{a=1}^p\sum_{b=1}^{q}	\int_0^1B^{(b)}_0(x)\frac{\d\psi_{b,a}(x)}{1-sx}A^{(a)}_{0}(x)},
	\]
	recalling the initial conditions for the mixed-type multiple orthogonal polynomials, we find
	\[
	\sum_{a=1}^p\sum_{b=1}^{q}\int_0^1B^{(b)}_0(x)\frac{\d\psi_{b,a}(x)}{1-sx}A^{(a)}_{0}(x)=
	\int_0^1\frac{\d\psi_{1,1}(x)}{1-sx},
	\]
	and
	\[
	(F(s))_{0}=1-\frac{1}{\int_0^1\frac{\d\psi_{1,1}(x)}{1-sx}}.
	\]
	The result follows immediately.
\end{proof}

Regarding stationary states we find:
\begin{teo}[Ergodic Markov chains]
	The Markov chain described is ergodic (or positive recurrent) if and only if $1$ is a mass point of $\d\psi_{b,a}$, with masses $m_{1,1}>0$ and $m_{a,b}\geqslant 0$, for $a\in\{1,\dots,p\}$, $b\in\{1,\dots,q\}$. In that case, the corresponding stationary distribution is
	\[	\begin{aligned}
		\pi&=\left[\begin{NiceMatrix}
			\pi_1 &\pi_2 &\Cdots
		\end{NiceMatrix}\right], &
		\pi_{n}&=\sum_{a=1}^p\sum_{b=1}^q B_n^{(b)}(1)m_{b,a}A_n^{(a)}(1).
	\end{aligned}\]
\end{teo}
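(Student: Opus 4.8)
The plan is to read off ergodicity and the stationary law from the Karlin--McGregor spectral representation by letting $k\to\infty$ and isolating the contribution of a possible mass at the spectral endpoint $x=1$. Since the matrix of measures $\d\psi_{b,a}$ is supported in $[0,1]$ and $x^k\to\1_{\{x=1\}}$ pointwise and boundedly on $[0,1]$, dominated convergence applied to
\[
(T^k)_{n,m}=\sum_{a=1}^p\sum_{b=1}^{q}\int_0^1B^{(b)}_n(x)\,x^k\,\d\psi_{b,a}(x)\,A^{(a)}_{m}(x)
\]
yields the Abelian limit
\[
\lim_{k\to\infty}(T^k)_{n,m}=\sum_{a=1}^p\sum_{b=1}^{q}B^{(b)}_n(1)\,m_{b,a}\,A^{(a)}_{m}(1),\qquad m_{b,a}\coloneq\psi_{b,a}(\{1\})\ge 0,
\]
the nonnegativity of the masses being inherited from the positivity of the spectral measure. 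The proof then rests on the standard renewal fact that, for an irreducible aperiodic chain, the diagonal limit $\lim_{k\to\infty}(T^k)_{m,m}$ equals $1/\bar t_m$, so it is strictly positive exactly when the state is positive recurrent and vanishes in the transient or null--recurrent regimes.

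For the forward implication I would assume ergodicity. Aperiodicity and irreducibility are already available, so ergodicity amounts to positive recurrence and hence $\lim_{k\to\infty}(T^k)_{0,0}=1/\bar t_0>0$. Using the initial conditions $B^{(b)}_0=\delta_{b,1}$ and $A^{(a)}_0=\delta_{a,1}$ recorded in the proof of the recurrence criterion, the spectral representation collapses on the diagonal to $(T^k)_{0,0}=\int_0^1x^k\,\d\psi_{1,1}(x)$, whose limit is precisely $m_{1,1}$. Therefore $m_{1,1}=1/\bar t_0>0$, so $1$ is a genuine mass point of $\d\psi_{1,1}$.

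For the converse I would start from $m_{1,1}>0$. Then $\int_0^1\frac{\d\psi_{1,1}(x)}{1-x}\ge m_{1,1}\lim_{x\to 1^-}\frac{1}{1-x}=+\infty$, so by the recurrence criterion the chain is recurrent; it is also irreducible and aperiodic. Positive recurrence follows from the same diagonal identity, since $(T^k)_{0,0}=\int_0^1x^k\,\d\psi_{1,1}(x)\to m_{1,1}>0$ forces $\bar t_0=1/m_{1,1}<\infty$, and irreducibility propagates positive recurrence to every state; hence the chain is ergodic. Finally, for an ergodic chain the limits $\lim_{k\to\infty}(T^k)_{n,m}=\pi_m$ exist, are strictly positive and independent of $n$, and define the unique stationary probability vector; evaluating the Abelian limit on the diagonal $n=m$ gives
\[
\pi_m=\sum_{a=1}^p\sum_{b=1}^{q}B^{(b)}_m(1)\,m_{b,a}\,A^{(a)}_{m}(1),
\]
which is the asserted formula.

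The main obstacle is the clean transfer between the analytic object (the spectral mass at $1$) and the probabilistic dichotomy (positive recurrence versus null recurrence or transience). This hinges on the renewal identity $\lim_k(T^k)_{m,m}=1/\bar t_m$ for aperiodic chains and on the legitimacy of exchanging limit and integral; the latter is immediate here because everything lives on the compact interval $[0,1]$ with uniformly bounded polynomial factors and finite measures. A secondary point to verify is that the diagonal evaluation genuinely recovers the stationary vector: the limit $\sum_{a,b}B^{(b)}_n(1)m_{b,a}A^{(a)}_m(1)$ must be independent of $n$, which is guaranteed abstractly by ergodic convergence and structurally by the stochasticity constraint $Te=e$, which forces the right eigenvector at $x=1$ to be constant and the mass matrix to act as a rank-one projector.
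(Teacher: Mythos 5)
Your proof is correct, and its analytic core coincides with the paper's: both arguments reduce the question to the diagonal entry $(T^k)_{0,0}=\int_0^1 x^k\,\d\psi_{1,1}(x)$, whose limit as $k\to\infty$ is exactly the mass of $\psi_{1,1}$ at $x=1$, and both conclude that positivity of this limit is equivalent to positive recurrence. The differences lie in the probabilistic input and in how the stationary law is extracted. The paper invokes the Karlin--McGregor criterion in the form $0<\lim_{n\to\infty}(T^{2n})_{0,0}<+\infty$ for finiteness of the expected return times, whereas you use the full-sequence renewal limit $\lim_{k\to\infty}(T^k)_{m,m}=1/\bar t_m$, which is legitimate here because aperiodicity has already been established; your version also treats the transient case uniformly, since the limit vanishes there as well, so your detour through the recurrence criterion ($m_{1,1}>0\Rightarrow\int_0^1\frac{\d\psi_{1,1}(x)}{1-x}=+\infty$) is actually redundant. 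For the stationary distribution the routes genuinely diverge: the paper passes to the limit $N\to\infty$ of the finite stationary vectors $\pi^{[N]}$ of Theorem \ref{teo:stationary_states_finite}, asserting $\lambda_0^{[N]}\to1$ and $\rho^{[N]}_{0,b}\mu^{[N]}_{0,a}\to m_{b,a}$, while you apply the ergodic convergence theorem directly to the infinite chain and identify $\pi_m$ with the Abelian limit of $(T^k)_{n,m}$ evaluated at $n=m$. Your route avoids having to justify the convergence of the finite-section spectral data to the mass at $1$ (which the paper states without detail), at the cost of relying on the classical limit theorem for countable positive recurrent aperiodic chains; the paper's route has the advantage of exhibiting the infinite stationary law explicitly as a limit of the finite ones. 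Both are sound, and your observation that the limit must be independent of $n$ is correctly underwritten by the ergodic theorem.
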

\begin{proof}
	
	Following \cite{KarlinMcGregor}, see also \cite{gallager}, for a recurrent chain the expected first passage times are all finite if
	\[
	0<\lim_{n\to\infty} T_{00}^{2n}<+\infty,
	\]
	that is
	\begin{align*}
		0<	\lim_{n\to\infty} \int_0^1\d\psi_{1,1} (x)x^{2n}<+\infty.
	\end{align*}
	Since $x^{2n}\xrightarrow[n\to\infty]{} 0$ monotonically for $0<x<1$, the chain is ergodic if and only if $\psi_{1,1}$ has a jump at $x=1$.
	Therefore, from Theorem \ref{teo:stationary_states_finite} we see that
	\begin{align*}
		\lim_{N\to\infty} \pi^{[N]}_{n}=		\lim_{N\to\infty}\sum_{a=1}^p\sum_{b=1}^q
		B_{m}^{(b)}\big(\lambda^{[N]}_0\big) A_{m}^{(a)}\big(\lambda^{[N]}_0\big)\rho^{[N]}_{0,b} \mu^{[N]}_{0,a}.
	\end{align*}
	Due to the fact that $1$ is a mass point, $\lim_{N\to\infty}\lambda^{[N]}_0=1$
	and $\lim_{N\to\infty} \rho^{[N]}_{0,b}\mu^{[N]}_{0,a}=m_{b,a}$.
\end{proof}

\appendix

\section{Totally nonnegative and oscillatory matrices}

Totally nonnegative (TN) matrices are those whose minors are all nonnegative. We denote by $\operatorname{InTN}$ the set of nonsingular TN matrices. Oscillatory matrices are those that are totally nonnegative, irreducible, and nonsingular; we denote this class by $\operatorname{IITN}$ (irreducible invertible totally nonnegative). Equivalently, a matrix $T$ is oscillatory if it is totally nonnegative and there exists $n\in\N$ such that $T^n$ is totally positive (i.e., all minors are positive). By the Cauchy--Binet theorem one deduces that these classes are invariant under matrix multiplication. In particular, the product of matrices in $\operatorname{InTN}$ belongs again to $\operatorname{InTN}$ (and analogous statements hold for TN and oscillatory matrices).

The following results are crucial:
\begin{itemize}
	\item \textbf{Gantmacher--Krein criterion.}
	A totally nonnegative matrix is oscillatory if and only if it is nonsingular and its entries on the first subdiagonal and the first superdiagonal are positive.
	
	\item \textbf{Gauss--Borel factorization.}
	The matrix $T$ belongs to $\operatorname{InTN}$ if and only if it admits a Gauss--Borel factorization $T=L^{-1}U^{-1}$ with $L,U\in\operatorname{InTN}$, where $L$ and $U$ are lower and upper triangular, respectively.
	
	\item \textbf{Spectrum.}
	If $T\in\R^{N\times N}$ is oscillatory, then $T$ has $N$ distinct positive eigenvalues.
	
	\item \textbf{Interlacing.}
	If $T\in\R^{N\times N}$ is oscillatory, then its eigenvalues strictly interlace those of the two principal submatrices of order $N-1$, namely $T(1)$ and $T(N)$, obtained from $T$ by deleting the first row and column, or the last row and column, respectively.
\end{itemize}

We now introduce some notation. For a totally nonzero vector $u\in\R^n$ (i.e., $u_i\neq 0$ for all $i$), its total sign variation is
\[
v(u)=\#\{\,i\in\{1,\dots,n-1\}: u_i u_{i+1}<0\,\}.
\]
For a general vector $u\in\R^n$, we define $v_m(u)$ (resp.\ $v_M(u)$) as the minimum (resp.\ maximum) of $v(y)$ over all totally nonzero vectors $y$ that coincide with $u$ on its nonzero entries. If $v_m(u)=v_M(u)$, we write $v(u)\coloneq v_m(u)=v_M(u)$.

Finally, regarding eigenvectors: let $T\in\R^{N\times N}$ be oscillatory, and let $u^{(k)}$ (resp.\ $w^{(k)}$) be a right (resp.\ left) eigenvector corresponding to $\lambda_k$, the $k$-th largest eigenvalue of $T$. Then:
\begin{itemize}
	\item We have
	\[
	v_m\!\big(u^{(k)}\big)=v_M\!\big(u^{(k)}\big)=v\!\big(u^{(k)}\big)=k-1,
	\qquad
	v_m\!\big(w^{(k)}\big)=v_M\!\big(w^{(k)}\big)=v\!\big(w^{(k)}\big)=k-1.
	\]
	Moreover, the first and last entries of $u^{(k)}$ (and of $w^{(k)}$) are nonzero, and $u^{(1)}$ and $u^{(N)}$ (as well as $w^{(1)}$ and $w^{(N)}$) are totally nonzero; the remaining eigenvectors may have a zero entry.
	
	\item By the Perron--Frobenius theorem, $u^{(1)}$ (and $w^{(1)}$) can be chosen entrywise positive. The remaining eigenvectors $u^{(k)}$ (and $w^{(k)}$), $k=2,\dots,N$, have at least one sign change; in fact, $u^{(N)}$ (and $w^{(N)}$) has strictly alternating signs.
\end{itemize}

\section{Recursion relations and their consequences}\label{appendix:MOP}

Let us introduce the recursion polynomials associated with the banded matrix 
\begin{align}\label{eq:monic_Hessenberg}
	T&=
	\left[	\begin{NiceMatrix}
		T_{0,0} &\Cdots &&T_{0,q}& 0 & \Cdots &&\\
		\Vdots& &&&\Ddots &\Ddots& &\\
		T_{p,0}&&&&&&&\\
		0&\Ddots[shorten-end=15pt]&&&&&&\\
		\Vdots[shorten-end=5pt]&\Ddots[shorten-end=5pt]&&&&&&\\
		&&&&&&&\\
		&&&&&&&\\[6pt]
		&&&&&& &
	\end{NiceMatrix}\right]
\end{align}
as the entries of semi-infinite left and right eigenvectors:
\[ \begin{aligned}
	A^{(a)}&=\left[\begin{NiceMatrix}
		A^{(a)}_0 & 	A^{(a)}_1& \Cdots
	\end{NiceMatrix}\right], & a&\in\{1,\dots, p\},&
	B^{(b)}&=\begin{bNiceMatrix}
		B^{(b)}_0 \\[2pt]	B^{(b)}_1\\ \Vdots
	\end{bNiceMatrix}, & b&\in\{1,\dots, q\},
\end{aligned}\]
which are left and right eigenvectors of \( T \) with eigenvalue \( x \), i.e.
\[ \begin{aligned}
	A^{(a)}T&=xA^{(a)},& a&\in\{1,\dots, p\},&
	TB^{(b)}&=xB^{(b)}, &b&\in\{1,\dots, q\}.
\end{aligned}\]

We assume that the semi-infinite matrix $T$ admits a normalized positive bidiagonal factorization (PBF) 
\begin{align}\label{eq:bidiagonal}
	T= L_{1} \cdots L_{p} \Delta U_q\cdots U_1,
\end{align}
with $\Delta=\diag(\Delta_0,\Delta_1,\dots)$ and normalized bidiagonal semi-infinite matrices (with diagonal entries equal to one) given by 
\begin{equation}\label{eq:bidiagonal_factors}
	\begin{aligned}
		L_k&\coloneq \left[\begin{NiceMatrix}[columns-width=auto]
			1 &0&\Cdots&\\
			L_{k|1,0} & 1 &\Ddots&\\
			0& L_{k|2,1}& 1& \\
			\Vdots[shorten-start=5pt,shorten-end=7pt] &\Ddots& \Ddots& \Ddots\\&&&
		\end{NiceMatrix}\right], & 
		U_k& \coloneq
		\left[\begin{NiceMatrix}[columns-width = auto]
			1& U_{k|0,1}&0&\Cdots&\\
			0& 1& U_{k|1,2}&\Ddots&\\
			\Vdots[shorten-end=5pt]&\Ddots[shorten-end=17pt]&1&\Ddots&\\
			& &\Ddots[shorten-end=12pt]  &\Ddots[shorten-end=8pt]  &\\&&&&
		\end{NiceMatrix}\right], 
	\end{aligned}
\end{equation}
and such that the positivity constraints $L_{k|i,j},U_{k|i,j},\Delta_i>0$, for $i\in\N_0$, are satisfied. 

The entries of these left and right eigenvectors are polynomials in the eigenvalue \(x\), known as left and right recursion polynomials, respectively, and are determined by the initial conditions
\begin{align}
	\label{eq:initcondtypeI}
	\begin{aligned}
		\begin{cases}
			A^{(1)}_0=1 , \\
			A^{(1)}_1= \nu^{(1)}_1 , \\
			\hspace{.895cm} \vdots \\
			A^{(1)}_{p-1}=\nu^{(1)}_{p-1} ,
		\end{cases}
		&&
		\begin{cases}
			A^{(2)}_0=0 , \\
			A^{(2)}_1= 1 , \\
			A^{(2)}_2= \nu^{(2)}_2 , \\
			\hspace{.895cm} \vdots \\
			A^{(2)}_{p-1}=\nu^{(2)}_{p-1} ,
		\end{cases}
		&& \cdots &&
		\begin{cases}
			A^{(p)}_0 =0 , \\
			\hspace{.915cm} \vdots \\
			A^{(p)}_{p-2} = 0 , \\
			A^{(p)}_{p-1} = 1,
		\end{cases}
	\end{aligned}
\end{align}
with \( \nu^{(i)}_{j} \) arbitrary constants, and 
\begin{align}
	\label{eq:initcondtypeII}
	\begin{aligned}
		\begin{cases}
			B^{(1)}_0=1 , \\
			B^{(1)}_1= \xi^{(1)}_1 , \\
			\hspace{.895cm} \vdots \\
			B^{(1)}_{q-1}=\xi^{(1)}_{q-1} ,
		\end{cases}
		&&
		\begin{cases}
			B^{(2)}_0=0 , \\
			B^{(2)}_1= 1 , \\
			B^{(2)}_2= \xi^{(2)}_2 , \\
			\hspace{.895cm} \vdots \\
			B^{(2)}_{q-1}=\xi^{(2)}_{q-1} ,
		\end{cases}
		&& \cdots &&
		\begin{cases}
			B^{(q)}_0 =0 , \\
			\hspace{.915cm} \vdots \\
			B^{(q)}_{q-2} = 0 , \\
			B^{(q)}_{q-1} = 1,
		\end{cases}
	\end{aligned}
\end{align}
with \( \xi^{(i)}_{j} \) also arbitrary. We also define the initial condition matrices
\begin{align}
	\label{eq:ic}
	\begin{aligned}
		\nu&\coloneq \begin{bNiceMatrix}
			1& 0 & \Cdots& && 0 \\
			\nu^{(1)}_1 & 1 & \Ddots&& & \Vdots \\
			\Vdots & \Ddots[shorten-start=-3pt,shorten-end=-3pt] & \Ddots& && \\
			&&&& &\\&&&&&0\\
			\nu^{(1)}_{p-1} &\Cdots& && \nu^{(p-1)}_{p-1}& 1 
		\end{bNiceMatrix} ,&
		\xi&\coloneq \begin{bNiceMatrix}
			1& 0 & \Cdots& && 0 \\
			\xi^{(1)}_1 & 1 & \Ddots&& & \Vdots \\
			\Vdots & \Ddots[shorten-start=-3pt,shorten-end=-3pt] & \Ddots& && \\
			&&&& &\\&&&&&0\\
			\xi^{(1)}_{q-1} &\Cdots& && \xi^{(q-1)}_{q-1}& 1 
		\end{bNiceMatrix}.
	\end{aligned}
\end{align}

The recursion polynomials are uniquely determined by the initial conditions \eqref{eq:initcondtypeI} and \eqref{eq:initcondtypeII} and by the recursion relations
\begin{align}\label{eq:recursion_dual_A}
	A^{(a)}_{n-q} T_{n-q,n}+ \cdots +A^{(a)}_{n+p} T_{n+p,n}&= x A^{(a)}_{n}, & n &\in\{0,1,\ldots\}, & a &\in\{1,\dots, p\}, & A_{-q}^{(a)} &=\dots=A^{(a)}_{-1}= 0,\\
	\label{eq:recursion_B}
	T_{n,n-p}B^{(b)}_{n-p} + \cdots + T_{n,n+q}B^{(b)}_{n+q} &= x B^{(b)}_{n}, & n &\in\{0,1,\ldots\}, & b&\in\{1,\dots, q\}, & B_{-p}^{(b)} &=\dots=B^{(b)}_{-1}= 0.
\end{align}

For the semi-infinite matrix \(T\), we consider the polynomials \(P_N(x)\) as the characteristic polynomials of the truncated matrices \(T^{[N-1]}\), i.e.,
\begin{align*}
	P_{N}(x)&\coloneq\begin{cases}
		1, & N=0,\\\det\big(xI_N-T^{[N-1]}\big), & N\in\N.
	\end{cases}
\end{align*}

Let us introduce the following matrices of left and right recursion polynomials:
\[ \begin{aligned}
	A_N &\coloneq	\begin{bNiceMatrix}
		A^{(1)}_N& \Cdots & A^{(1)}_{N+p-1}	 \\[2pt]
		\Vdots & & \Vdots \\[2pt]
		A^{(p)}_N	& \Cdots & A^{(p)}_{N+p-1}
	\end{bNiceMatrix}, & 
	B_N &\coloneq	\begin{bNiceMatrix}
		B^{(1)}_N & \Cdots & B^{(q)}_N \\[2pt]
		\Vdots & & \Vdots \\[2pt]
		B^{(1)}_{N+q-1} & \Cdots & B^{(q)}_{N+q-1}
	\end{bNiceMatrix},& N&\in \N_0 ,
\end{aligned}\]
and the following products
\[ \begin{aligned}
	\alpha_N &\coloneq (-1)^{(p-1)N}T_{p,0}\cdots T_{N+p-1,N-1}, &
	\beta_N &\coloneq (-1)^{(q-1)N}T_{0,q}\cdots T_{N-1,N+q-1} , & N&\in \N,
\end{aligned}\]
with \( \alpha_0=\beta_0=1 \).
Recall that since the entries on the extreme diagonals do not vanish, we have
\( \alpha_N,\beta_N\neq 0 \).

For \( N\in\N_0 \), the characteristic polynomials and the determinants of the left and right recursion polynomial blocks satisfy
\begin{align*}
	P_N(x)&=\alpha_N\det A_N(x)=\beta_N\det B_N(x).
\end{align*}

We now consider determinantal polynomials constructed in terms of determinants of left and right recursion polynomials that yield left and right eigenvectors of $T^{[N]}$.
Let us introduce the determinantal polynomials
\begin{align}\label{eq:QNn}
	\begin{aligned}
		Q_{n,N}&\coloneq\begin{vNiceMatrix}
			A^{(1)}_{n} & \Cdots[shorten-start=-3pt] & A^{(p)}_{n} \\[2pt]
			A^{(1)}_{N+1} & \Cdots[shorten-start=-1pt]  & A^{(p)}_{N+1} \\[2pt]
			\Vdots & & \Vdots \\[2pt]
			A^{(1)}_{N+p-1} & \Cdots[shorten-start=-3pt]  & A^{(p)}_{N+p-1}
		\end{vNiceMatrix},&
		R_{n,N}&\coloneq\begin{vNiceMatrix}
			B^{(1)}_{n} & \Cdots[shorten-start=-3pt]  & B^{(q)}_{n} \\[2pt]
			B^{(1)}_{N+1} & \Cdots[shorten-start=-1pt]  & B^{(q)}_{N+1} \\[2pt]
			\Vdots & & \Vdots \\[2pt]
			B^{(1)}_{N+q-1} & \Cdots[shorten-start=-3pt]  & B^{(q)}_{N+q-1}
		\end{vNiceMatrix},
	\end{aligned}
\end{align}
the semi-infinite row and column vectors
\[	\begin{aligned}
	Q_N&\coloneq\left[\begin{NiceMatrix}
		Q_{0,N} &Q_{1,N} &\Cdots
	\end{NiceMatrix}\right], &
	R_N&\coloneq\left[\begin{NiceMatrix}
		R_{0,N} \\R_{1,N} \\\Vdots
	\end{NiceMatrix}\right],
\end{aligned}\]
and corresponding truncations
\[ 	\begin{aligned}
	Q^{\langle N\rangle}&\coloneq \begin{bNiceMatrix}
		Q_{0,N} &Q_{1,N}&\Cdots & Q_{N,N}
	\end{bNiceMatrix}, & 
	R^{\langle N\rangle}&\coloneq \begin{bNiceMatrix}
		R_{0,N} \\R_{1,N}\\\Vdots \\ R_{N,N}
	\end{bNiceMatrix}.
\end{aligned}\]

There is a generalized Christoffel--Darboux formula for the determinantal polynomials and the characteristic polynomial of a banded matrix:
\begin{enumerate}
	\item The biorthogonal families of left and right eigenvectors $\big\{w^{\langle N\rangle}_k\big\}_{k=0}^{N}$ and $\big\{u^{\langle N\rangle}_k\big\}_{k=0}^{N}$ are
	\begin{equation}\label{eq:eigenvectors}
		\begin{aligned}
			w^{\langle N\rangle}_{k}&=\frac{ Q^{\langle N\rangle}\big(\lambda^{[N]}_k\big)}{\beta_N\sum_{l=0}^{N}Q_{l,N}\big(\lambda^{[N]}_k\big)R_{l,N}\big(\lambda^{[N]}_k\big)}, &
			u^{\langle N\rangle}_{k}&=\beta_N R^{\langle N\rangle}\big(\lambda^{[N]}_k\big).
		\end{aligned}
	\end{equation}
	that is, $w^{\langle N\rangle}_{k}u^{\langle N\rangle}_{l}=\delta_{k,l}$.
	
	\item In terms of the characteristic polynomial, the left eigenvector entries can be written as
	\[	\begin{aligned}
		w^{\langle N\rangle}_{k,n}&=
		\frac{ \alpha_N Q_{n,N}\big(\lambda^{[N]}_k\big)
		}{
			P_{N}\big(\lambda^{[N]}_k\big)P'_{N+1}\big(\lambda^{[N]}_k\big)}.
	\end{aligned}\]
	
	\item The corresponding matrices $\mathscr U$ (with columns the right eigenvectors $u_k$ arranged in the standard order) and 
	$\mathscr W$ (with rows the left eigenvectors $w_k$ arranged in the standard order) satisfy
	$ \mathscr U\mathscr W=\mathscr W\mathscr U=I_{N+1}$.
	
	\item In terms of the diagonal eigenvalue matrix $D=\diag\big(\lambda^{[N]}_0,\lambda^{[N]}_1,\dots,\lambda^{[N]}_{N}\big)$ we have
	\begin{align}\label{eq:UDnW=Jn}
		\begin{aligned}
			\mathscr UD^n\mathscr W&=\big(T^{[N]}\big)^n, & n&\in\N_0.
		\end{aligned}
	\end{align}
\end{enumerate}

The so-called Christoffel numbers $\mu^{[N]}_{k,a}$ and $\rho^{[N]}_{k,b}$, $a\in\{1,\dots,p\}$ and $b\in\{1,\dots,q\}$, see \cite{BFM1}, allow us to write
\[	\begin{aligned}
	w^{\langle N\rangle}_{k,n}&= A_{n-1}^{(1)}\big(\lambda^{[N]}_k\big)\mu^{[N]}_{k,1} +\cdots +A_{n-1}^{(p)}\big(\lambda^{[N]}_k\big)\mu^{[N]}_{k,p},\\
	\label{eq:discrete_linear_form_typeII}
	u^{\langle N\rangle}_{k,n}&= B_{n-1}^{(1)}\big(\lambda^{[N]}_k\big)\rho^{[N]}_{k,1} +\cdots +B_{n-1}^{(q)}\big(\lambda^{[N]}_k\big)\rho^{[N]}_{k,q}.
\end{aligned}\]

The Christoffel numbers are connected with the left and right eigenvector entries and the initial conditions as follows:
\begin{align*}
	\begin{aligned}
		\begin{bNiceMatrix}
			\mu^{[N]}_{k,1} \\[5pt]
			\mu^{[N]}_{k,2} \\
			\Vdots
			\\
			\mu^{[N]}_{k,p}
		\end{bNiceMatrix}
		&= \nu^{-1} \begin{bNiceMatrix}
			w^{\langle N\rangle}_{k,1} \\[5pt]
			w^{\langle N\rangle}_{k,2} \\
			\Vdots \\
			w^{\langle N\rangle}_{k,p}
		\end{bNiceMatrix}, &
		\begin{bNiceMatrix}
			\rho^{[N]}_{k,1} \\[5pt]
			\rho^{[N]}_{k,2} \\
			\Vdots
			\\
			\rho^{[N]}_{k,q}
		\end{bNiceMatrix}
		&= \xi^{-1} \begin{bNiceMatrix}
			u^{\langle N\rangle}_{k,1} \\[5pt]
			u^{\langle N\rangle}_{k,2} \\
			\Vdots \\
			u^{\langle N\rangle}_{k,q}
		\end{bNiceMatrix}.
	\end{aligned}
\end{align*}

In \cite{BFM1} we proved that, for an appropriate choice of the initial conditions for the recursion polynomials in terms of the bidiagonal factorization, the Christoffel numbers are positive, i.e.
\[	\begin{aligned}
	\rho^{[N]}_{k,b}&>0, &\mu^{[N]}_{k,a}&>0, &k&\in\{0,1,\dots,N\}, &a&\in\{1,\dots, p\}, & b&\in\{1,\dots,q\}.
\end{aligned}\]

Let us consider the following step functions:
\begin{align*}
	\psi^{[N]}_{b,a}&\coloneq \begin{cases}
		0, & x<\lambda^{[N]}_{N},\\[2pt]
		\rho^{[N]}_{1,b}\mu^{[N]}_{1,a}+\cdots+\rho^{[N]}_{k,b}\mu^{[N]}_{k,a}, & \lambda^{[N]}_{k+1}\leqslant x< \lambda^{[N]}_{k}, \quad k\in\{0,1,\dots,N-1\},\\[2pt]
		\rho^{[N]}_{1,b}\mu^{[N]}_{1,a}+\cdots+\rho^{[N]}_{N+1,b}\mu^{[N]}_{N+1,a} ,
		& x \geqslant \lambda^{[N]}_{0}.
	\end{cases}
\end{align*}
The last step of these step functions is bounded. Specifically, for $a\in\{1, \dots, p\}$ and $b\in\{1, \dots, q\}$, we have 
\begin{align}\label{eq:bound}
	\rho^{[N]}_{0,b}\mu^{[N]}_{0,a}+\cdots+\rho^{[N]}_{N,b}\mu^{[N]}_{N,a} = (\xi^{-1}I_{q,p}\nu^{-\top})_{b,a}.
\end{align}
Here $I_{q,p}\in\R^{q\times p}$ is the rectangular identity matrix, with $(I_{q,p})_{k,l}=\delta_{k,l}$. 

Let us introduce the $q\times p$ matrix
$\Psi^{[N]}\coloneq\begin{bNiceMatrix}[small]
	\psi^{[N]}_{1,1}&\Cdots &\psi^{[N]}_{1,p}\\
	\Vdots & &\Vdots\\
	\psi^{[N]}_{q,1}&\Cdots &\psi^{[N]}_{q,p}
\end{bNiceMatrix}$
and the corresponding $q\times p$ matrix of discrete Lebesgue--Stieltjes measures supported at the zeros of $P_{N+1}$,
\begin{align}\label{eq:discrete_mesures}
	\d\Psi^{[N]}=\begin{bNiceMatrix}
		\d\psi ^{[N]}_{1,1}&\Cdots &	\d\psi^{[N]}_{1,p}\\
		\Vdots & &\Vdots\\
		\d\psi^{[N]}_{q,1}&\Cdots &	\d\psi^{[N]}_{q,p}
	\end{bNiceMatrix}=\sum_{k=1}^{N+1}\begin{bNiceMatrix}
		\rho^{[N]}_{k,1}\\\Vdots \\	\rho^{[N]}_{k,q}
	\end{bNiceMatrix}\begin{bNiceMatrix}
		\mu^{[N]}_{k,1} & \Cdots & \mu^{[N]}_{k,p}
	\end{bNiceMatrix}\delta\big(x-\lambda^{[N]}_k\big).
\end{align}

The following biorthogonality relations hold:
\begin{align*}
	\sum_{a=1}^p\sum_{b=1}^{q} \int B^{(b)}_n(x)\d\psi^{[N]}_{b,a}(x)A^{(a)}_{m}(x)&=\delta_{n,m}, &n,m&\in\{0,\dots,N\}.
\end{align*}
The following orthogonality relations, for $m \in\{1,\dots, N\}$, are satisfied:
\begin{align*}
	\sum_{a=1}^p\int x^n\d\psi ^{[N]}_{b,a}(x)A^{(a)}_{m}(x)&=0, & 
	n&\in\left\{0,\dots,\left	\lceil \frac{m+1-b}{q}\right \rceil -1\right\},&	b&\in\{1,\dots,q\},\\
	\sum_{b=1}^q\int B^{(b)}_{m}(x)\d\psi ^{[N]}_{b,a}(x)x^n&=0, 
	& n&\in\left\{0,\dots,\left	\lceil \frac{m+1-a}{p}\right \rceil -1\right\}, & a&\in\{1,\dots,p\}.
\end{align*}
Therefore, the recursion polynomials are discrete multiple orthogonal polynomials of mixed type on the step-line. Moreover, we have the spectral representation
\[
\begin{aligned}
	(T^k)_{m,n}&=	\sum_{a=1}^p\sum_{b=1}^{q} \int B^{(b)}_n(x)x^k\d\psi^{[N]}_{b,a}(x)A^{(a)}_{m}(x),
&& m,n\in\{0,\dots,N\}.
\end{aligned}
\]

Hence, since the Christoffel coefficients are positive and the step functions are uniformly bounded in $N$, there exists a matrix of measures $\d\Psi$ as the large-$N$ limit of the truncated discrete measures $\d\Psi^{[N]}$. This constitutes our Favard-type spectral theorem:
\[
(T^k)_{m,n}=	\sum_{a=1}^p\sum_{b=1}^{q} \int B^{(b)}_n(x)x^k\d\psi_{b,a}(x)A^{(a)}_{m}(x),
\qquad m,n\in\N_0,
\]
together with the following multiple orthogonality relations of mixed type: for $m\in\N$,
\[\begin{aligned}
	\sum_{a=1}^p\int x^n\d\psi_{b,a}(x)A^{(a)}_{m}(x)&=0, & 
	n&\in\left\{0,\dots,\left	\lceil \frac{m+1-b}{q}\right \rceil -1\right\},&	b&\in\{1,\dots,q\},\\
	\sum_{b=1}^q\int B^{(b)}_{m}(x)\d\psi_{b,a}(x)x^n&=0, 
	& n&\in\left\{0,\dots,\left	\lceil \frac{m+1-a}{p}\right \rceil -1\right\}, & a&\in\{1,\dots,p\}.
\end{aligned}\]

\section*{Conclusions and outlook}

In this paper we have developed a spectral approach to discrete-time Markov chains whose transition matrices are bounded, banded and admit a stochastic bidiagonal factorization. This framework extends the classical birth--and--death setting to genuinely banded dynamics with several upward and downward jumps, while preserving a strong positivity structure in the sense of total positivity and oscillation theory.

A central result is the construction of a canonical stochastic bidiagonal factorization associated with any positive bidiagonal factorization of a banded stochastic matrix. This normalization, implemented through explicit diagonal conjugations, provides a transparent probabilistic interpretation of the dynamics as a concatenation of elementary birth and death mechanisms, and establishes a direct bridge between the oscillatory structure of the underlying operator and the Markov evolution.

Using the spectral Favard theorem for banded matrices with positive bidiagonal factorization, we have derived Karlin--McGregor type spectral representations in terms of  multiple orthogonal polynomials of mixed-type and matrix-valued spectral measures. In the finite case, this yields explicit formulas for transition probabilities, first-passage generating functions and stationary distributions, together with geometric convergence rates governed by the second largest eigenvalue. In the countably infinite case, we obtain analogous integral representations and characterize recurrence, transience and ergodicity through precise spectral conditions, including a mass-at-one criterion for positive recurrence.

From a broader perspective, this work shows that mixed-type multiple orthogonal polynomials provide a natural and flexible spectral language for the analysis of banded Markov chains beyond the tridiagonal regime. The results unify and clarify several previously studied Hessenberg and tetradiagonal models, and correct or refine certain aspects of earlier treatments by embedding them into a more general and robust banded framework.

Several directions for future research appear naturally. On the probabilistic side, it would be of interest to explore explicit models and applications, including random walks with internal degrees of freedom or multi-urn constructions associated with the stochastic bidiagonal factors. On the analytical side, extensions to unbounded banded operators, continuous-time Markov processes, or perturbations breaking exact bidiagonal factorizability merit further investigation. Finally, the interplay between total positivity, integrable structures and stochastic dynamics suggested by the present approach points toward deeper connections with non-commutative probability, integrable Markov processes and scaling limits, which we plan to address in future work.

\section*{Acknowledgments}
AB acknowledges Centro de Matemática da Universidade de Coimbra UID/MAT/00324/2020, funded by the Portuguese Government through FCT/MEC and co-funded by the European Regional Development Fund through the Partnership Agreement PT2020.

AF acknowledges CIDMA Center for Research and Development in Mathematics and Applications (University of Aveiro) and the Portuguese Foundation for Science and Technology (FCT) within project UIDB/MAT/UID/04106/2020 and UIDP/MAT/04106/2020.

MM acknowledges Spanish ``Agencia Estatal de Investigación'' research projects [PID2021- 122154NB-I00], \emph{Ortogonalidad y Aproximación con Aplicaciones en Machine Learning y Teoría de la Probabilidad} and  [PID2024-155133NB-I00],  \emph{Ortogonalidad, aproximación e integrabilidad: aplicaciones en procesos estocásticos clásicos y cuánticos}.

\printbibliography

\end{document}
	
\end{document}